% \RequirePackage[l2tabu, orthodox]{nag} %--- warns user of any use of obsolete latex commands/packages

%------------------------------------------------------------------------------
%			packages
%------------------------------------------------------------------------------

\documentclass[reqno]{amsart}

\pdfoutput=1
%------------------------------------------
%           packages
%------------------------------------------
\usepackage[utf8]{inputenc} %encoding
\usepackage{bbm} %allows for writing small case blackboard letters
\usepackage{commath} %required to use \dif{}
\usepackage{mathtools} %required to use psmallmatrix
\usepackage{todonotes} %allows for adding orange notes
\usepackage[colorlinks, citecolor=red, breaklinks=true]{hyperref} %makes links in the PDF file
\usepackage[stretch=10, shrink=10]{microtype} %typographical package, gives more flexible spacing between characters
\usepackage[initials,msc-links,non-compressed-cites,nobysame]{amsrefs}[2007/10/22]
\usepackage{subcaption}
%\usepackage{lineno}
%\linenumbers

%------------------------------------------
%       theorem-like environments
%------------------------------------------
\newtheorem{theorem}{Theorem}[section]
\newtheorem{lemma}[theorem]{Lemma}
\newtheorem{proposition}[theorem]{Proposition}
\newtheorem{fact}[theorem]{Fact}

\theoremstyle{definition}
\newtheorem{definition}[theorem]{Definition}

\theoremstyle{remark}
\newtheorem{remark}[theorem]{Remark}
\newtheorem{example}[theorem]{Example}

\numberwithin{equation}{section}

%------------------------------------------
%           custom commands
%------------------------------------------

\let\Im\relax
\DeclareMathOperator{\Im}{Im}
\let\Re\relax
\DeclareMathOperator{\Re}{Re}

%------------------------------------------
%           custom margins
%------------------------------------------
\usepackage{paralist,enumitem}
\setlength{\parindent}{0pt}
\setlength{\parskip}{8pt}
\setlength{\normalparindent}{0pt} % amsart only
\setlength{\plparsep}{1pt}
\setlength{\plitemsep}{3pt}
\setlist{listparindent=0pt,parsep=3pt}

%------------------------------------------
%           amsrefs styling
%------------------------------------------

%% If we have a doi, link to it in the title.  Otherwise, if we have a url
%% and no eprint field, link to the url.  Otherwise, install
%% no link. 
\newcommand{\TitleWithUrl}[1]{\IfEmptyBibField{doi}%
  {\IfEmptyBibField{url}{\textit{#1}}%
    {\IfEmptyBibField{eprint}{\href {\BibField{url}}{\textit{#1}}}{\textit{#1}}}%
    }%
  {\href {https://doi.org/\BibField{doi}}{\textit{#1}}}}
\renewcommand{\eprint}[1]{\IfEmptyBibField{url}{\url{#1}}%
  {\href {\BibField{url}}{#1}}}

\BibSpec{article}{%
    +{}  {\PrintAuthors}                {author}
    +{,} { \TitleWithUrl}               {title}
    +{.} { }                            {part}
    +{:} { \textit}                     {subtitle}
    +{,} { \PrintContributions}         {contribution}
    +{.} { \PrintPartials}              {partial}
    +{,} { }                            {journal}
    +{}  { \textbf}                     {volume}
    +{}  { \PrintDatePV}                {date}
    +{,} { \issuetext}                  {number}
    +{,} { \eprintpages}                {pages}
    +{,} { }                            {status}
%    +{,} { \PrintDOI}                   {doi}
    +{,} { available at arXiv:\eprint}        {eprint}
    +{}  { \parenthesize}               {language}
    +{}  { \PrintTranslation}           {translation}
    +{;} { \PrintReprint}               {reprint}
    +{.} { }                            {note}
    +{.} {}                             {transition}
    +{}  {\SentenceSpace \PrintReviews} {review}
}

\BibSpec{collection.article}{%
    +{}  {\PrintAuthors}                {author}
    +{,} { \TitleWithUrl}                     {title}
    +{.} { }                            {part}
    +{:} { \textit}                     {subtitle}
    +{,} { \PrintContributions}         {contribution}
    +{,} { \PrintConference}            {conference}
    +{}  {\PrintBook}                   {book}
    +{,} { }                            {booktitle}
    +{,} { \PrintDateB}                 {date}
    +{,} { pp.~}                        {pages}
    +{,} { }                            {status}
%    +{,} { \PrintDOI}                   {doi} 
    +{,} { available at arXiv:\eprint}        {eprint}
    +{}  { \parenthesize}               {language}
    +{}  { \PrintTranslation}           {translation}
    +{;} { \PrintReprint}               {reprint}
    +{.} { }                            {note}
    +{.} {}                             {transition}
    +{}  {\SentenceSpace \PrintReviews} {review}
}
\BibSpec{book}{%
    +{}  {\PrintPrimary}                {transition}
    +{,} { \TitleWithUrl}               {title}
    +{.} { }                            {part}
    +{:} { \textit}                     {subtitle}
    +{,} { \PrintEdition}               {edition}
    +{}  { \PrintEditorsB}              {editor}
    +{,} { \PrintTranslatorsC}          {translator}
    +{,} { \PrintContributions}         {contribution}
    +{,} { }                            {series}
    +{,} { \voltext}                    {volume}
    +{,} { }                            {publisher}
    +{,} { }                            {organization}
    +{,} { }                            {address}
    +{,} { \PrintDateB}                 {date}
    +{,} { }                            {status}
    +{}  { \parenthesize}               {language}
    +{}  { \PrintTranslation}           {translation}
    +{;} { \PrintReprint}               {reprint}
    +{.} { }                            {note}
    +{.} {}                             {transition}
    +{}  {\SentenceSpace \PrintReviews} {review}
}

\BibSpec{thesis}{%
    +{}  {\PrintAuthors}                {author}
    +{.} { \TitleWithUrl}               {title}
    +{:} { \textit}                     {subtitle}
    +{,} { \PrintThesisType}            {type}
    +{,} { }                            {organization}
    +{} { \PrintDate}                  {date}
    +{,} { }                            {address}
    +{,} { \eprint}                     {eprint}
    +{,} { }                            {status}
    +{}  { \parenthesize}               {language}
    +{}  { \PrintTranslation}           {translation}
    +{;} { \PrintReprint}               {reprint}
    +{.} { }                            {note}
    +{.} {}                             {transition}
    +{}  {\SentenceSpace \PrintReviews} {review}
}

%------------------------------------------
%           front page matter
%------------------------------------------
\title[Weierstrass representations of discrete cmc surfaces in $\mathbb{I}^3$]{Weierstrass representations of discrete constant mean curvature surfaces in isotropic space}

\author{Joseph Cho}
\address[Joseph Cho]{Global Leadership School, Handong Global University, 558 Handong-ro Buk-gu, Pohang, Gyeongsangbuk-do 37554, Republic of Korea}
\email{jcho@handong.edu}

\author{Masaya Hara}
\address[Masaya Hara]{Department of Mathematics, Graduate School of Science, Kobe university, 1-1 Rokkodai-cho, Nada-ku, Kobe 657-8501, Japan}
\email{mhara@math.kobe-u.ac.jp}
\subjclass[2020]{Primary: 53A70, Secondary: 53A35, 53B30}
\keywords{constant mean curvature surfaces, isotropic geometry, Weierstrass representation}

%------------------------------------------
%           document starts
%------------------------------------------
\begin{document}

\begin{abstract}
	In this paper, we obtain Weierstrass representations for discrete constant mean curvature surfaces in isotropic $3$-space, and use this to construct examples with discrete closed-form parametrizations.
\end{abstract}

\maketitle

\section{Introduction}
Weierstrass representations have been fundemental in obtaining a deeper understanding of various classes of surfaces that admit them, as they provide a powerful method to analyze and construct surfaces of high differential geometric interest.
This is exemplified by the rich study of surface classes that admit the representations, including
\begin{itemize}
    \item minimal surfaces in Euclidean space \cite{weierstrass_UntersuchungenUberFlachen_1866},
    \item maximal surfaces and timelike minimal surfaces in Minkowski space \cite{kobayashi_MaximalSurfaces$3$dimensional_1983, konderak_WeierstrassRepresentationTheorem_2005},  and
    \item constant mean curvature $1$ (cmc-$1$) surfaces in hyperbolic space \cite{bryant_SurfacesMeanCurvature_1987}.
\end{itemize}
The representations themselves have also been revisited from various differential geometric viewpoints, such as spin geometry \cite{kusner_SpinorRepresentationMinimal_1995}, singularity theory \cite{umehara_MaximalSurfacesSingularities_2006}, or transformation theory \cite{umehara_ParametrizationWeierstrassFormulae_1992, hertrich-jeromin_MobiusGeometrySurfaces_2001, pember_WeierstrasstypeRepresentations_2020}.
Non-zero constant mean curvature (cmc) surfaces in Euclidean space or Minkowski space also admit Weierstrass-type representations \cite{dorfmeister_WeierstrassTypeRepresentation_1998, brander_HolomorphicRepresentationConstant_2010}; however, these methods are different from the aforementioned representations as they utilize the theory of loop groups.

Zero mean curvature surfaces in isotropic space also admit Weierstrass representations, as first found by Strubecker \cite{strubecker_DifferentialgeometrieIsotropenRaumes_1942a}, and these representations have been recently rediscovered from various geometric viewpoints, for example, \cite{dasilva_HolomorphicRepresentationMinimal_2021, seo_ZeroMeanCurvature_2021, sato_$d$minimalSurfacesThreedimensional_2021, cho_SpinorRepresentationIsotropic_2024}.
However, the case of isotropic space stands out from other space forms as the class of cmc surfaces in isotropic space also admit Weierstrass representations \cite{cho_SpinorRepresentationIsotropic_2024} that is similar to the zero mean curvature case: any cmc-$H$ surface $Y$ in isotropic space can locally be represented as
    \begin{equation}\label{eqn:wRepStart}
        Y = \Re \int (\bar{h} + g, 1, -\mathbbm{i}) \omega
    \end{equation}
where $h$ is a holomorphic function, $g$ a meromorphic function, and $\omega$ a holomorphic $1$-form such that $g^2 \omega$ is holomorphic while $\dif{h} = H\omega$.

With the growth of discrete differential geometry, a field that seeks to study discrete geometric structures with integrability as its central tenet, various works have found the discrete counterparts of Weierstrass representations, such as
\begin{itemize}
    \item discrete minimal surfaces in Euclidean space \cite{bobenko_DiscreteIsothermicSurfaces_1996},
    \item discrete maximal surfaces in Minkowski space \cite{yasumoto_DiscreteMaximalSurfaces_2015},
    \item discrete cmc-$1$ surfaces in hyperbolic space \cite{hertrich-jeromin_TransformationsDiscreteIsothermic_2000}, or
    \item discrete zero mean curvature surfaces in isotropic space \cite{pember_DiscreteWeierstrasstypeRepresentations_2023}.
\end{itemize}
The common key ingredient in recovering these discrete counterparts 
is the characterization of smooth Weierstrass representations in terms of  \emph{transformation theory} of isothermic surfaces, those surfaces that admit conformal curvature line coordinates away from umbilics.
Building on these characterizations, many aforementioned works on discrete Weierstrass representations use the theory of discrete isothermic surfaces \cite{bobenko_DiscreteIsothermicSurfaces_1996} and its transformation theory \cite{hertrich-jeromin_DiscreteVersionDarboux_1999, hertrich-jeromin_MobiusGeometrySurfaces_2001} to obtain the results.

In this paper, our aim is to examine the case of cmc surfaces in isotropic space, and obtain a discrete analogue of Weierstrass representations \eqref{eqn:wRepStart} for these surfaces.
However, the Weierstrass representation of smooth cmc surfaces lack a known characterization in terms of transformation theory.
Therefore, we choose an alternate characterization that is unique to cmc surfaces in isotropic space \cite{seo_ZeroMeanCurvature_2021}: namely, any cmc surface in isotropic space can be constructed from a zero mean curvature surface and a sphere.
By reinterpreting the Weierstrass representation \eqref{eqn:wRepStart} in terms of this characterization, we will obtain a discrete analogue of the Weierstrass representation, and check that the resulting discrete surface indeed has constant mean curvature given in terms of \emph{mixed areas} \cite{pottmann_GeometryMultilayerFreeform_2007, pember_DiscreteWeierstrasstypeRepresentations_2023}.

To achieve this goal, we structure our paper as follows.

Section~\ref{sect:prem} is preparatory in nature, introducing smooth and discrete surface theory in isotropic $3$-space.
In detail, after introducing isotropic space in Section~\ref{sub:one}, Sections~\ref{sub:two} and \ref{sub:three} concern smooth surface theory, culminating in the important reinterpretation of Weierstrass representation for cmc surfaces in Example~\ref{exam:Wcmc}.
Then we switch our attention to discrete surface theory in Sections~\ref{sub:four} and \ref{sub:five}, reviewing the concept of circular nets and discrete isothermic surfaces in the context of isotropic space \cite{pottmann_DiscreteSurfacesIsotropic_2007}.
In doing so, we also closely examine the concept of \emph{discrete lightlike Gauss maps} of circular nets, as it allows us to consider mean curvatures for discrete surfaces via mixed areas.

Our main result is contained in Section~\ref{sect:main}: we will first use the characterization of cmc surfaces in terms of a zero mean curvature surface and a sphere, and create a discrete isothermic surface with identical property in Section~\ref{sub:3one}.
Then in Section~\ref{sub:3two}, we will construct a discrete lightlike Gauss map for the discrete isothermic surface, and use this to check that the surface indeed has constant mean curvature in Section~\ref{sub:3three}, culminating in the desired Weierstrass representation for discrete cmc surfaces in Theorem~\ref{thm:wRep}.
After discussing the existence of parallel discrete cmc surfaces in Theorem~\ref{thm:parcmc}, we put our representation to test by constructing concrete examples in Section~\ref{sub:3four}, including discrete Delaunay-type surfaces in isotropic space.
In particular, we obtain \emph{explicit closed-form parametrizations} of all our examples, a relatively rare result in discrete differential geometry.

\textbf{Acknowledgements.}
The first author gratefully acknowledges the support from Handong Global University via New Faculty Research Grant.
The second author gratefully acknowledges the support from JST SPRING, Grant Number JPMJSP2148, including the opportunity to visit TU Wien.

\section{Preliminaries} \label{sect:prem}
We first introduce isotropic $3$-space and the surface theory within, both smooth and discrete.

\subsection{Laguerre geometric model of isotropic \texorpdfstring{$3$}{3}-space}\label{sub:one}
In this paper, we will use the Laguerre geometric model of isotropic $3$-space, which we briefly review here.
(For a more detailed account, we refer the readers to \cite[Section~2.2]{cho_SpinorRepresentationIsotropic_2024}.)

Let $\mathbb{R}^{3,1}$ denote the Minkowski $4$-space equipped with the bilinear form $( \cdot , \cdot )$ of signature $(- + + +)$, and for any $\tilde{S} \in \mathbb{R}^{3,1}$, let
    \[
        \mathcal{L}_{\tilde{S}} := \{ X \in \mathbb{R}^{3,1} : (X - \tilde{S}, X - \tilde{S}) = 0\}
    \]
be the affine lightcone of $\mathbb{R}^{3,1}$ centered around $\tilde{S}$.
For brevity, we will write the lightcone $\mathcal{L}_{0}$ of $\mathbb{R}^{3,1}$ as $\mathcal{L}$.
Choosing $\mathfrak{p}, \tilde{\mathfrak{p}} \in \mathcal{L}$ such that $(\mathfrak{p}, \tilde{\mathfrak{p}}) = 1$, we have
    \[
        \langle \mathfrak{p}, \tilde{\mathfrak{p}} \rangle^\perp \cong \mathbb{R}^2,
    \]
allowing us to split $\mathbb{R}^{3,1}$ as
    \[
        \mathbb{R}^{3,1} = \mathbb{R}^2 \oplus_\perp (\langle \mathfrak{p} \rangle \oplus \langle \tilde{\mathfrak{p}} \rangle).
    \]
Thus, we may define projections $\pi_\mathfrak{p}: \mathbb{R}^{3,1} \to \langle \mathfrak{p} \rangle$ and $\pi
    _{\tilde{\mathfrak{p}}}: \mathbb{R}^{3,1} \to \langle \tilde{\mathfrak{p}} \rangle$ via
    \[
        \pi_\mathfrak{p}(X) := (X,\tilde{\mathfrak{p}}) \mathfrak{p} \quad\text{and}\quad \pi_{\tilde{\mathfrak{p}}}(X) := (X,\mathfrak{p}) \tilde{\mathfrak{p}},
    \]
which in turn allows us to define orthoprojection $\pi_2 : \mathbb{R}^{3,1} \to \mathbb{R}^2$ via
    \[
        \pi_2(X) := X - \pi_\mathfrak{p}(X) - \pi_{\tilde{\mathfrak{p}}}(X).
    \]

Under the splitting, isotropic $3$-space $\mathbb{I}^3$ is given by the kernel of $\pi_{\tilde{\mathfrak{p}}}$, that is, it is the subspace
    \[
        \mathbb{I}^3 = \{ X \in \mathbb{R}^{3,1} : (X, \mathfrak{p}) = 0\} = \mathbb{R}^2 \oplus \langle \mathfrak{p} \rangle.
    \]
Therefore, any $X \in \mathbb{I}^3$ can be written as
    \[
        X = \pi_2(X) + \pi_\mathfrak{p}(X) =: x + \chi \mathfrak{p}
    \]
for some $\chi \in \mathbb{R}$.
To make use of explicit coordinates for isotropic space, we normalize
    \[
        \mathfrak{p} = (1,0,0,1)^t \quad\text{and}\quad \tilde{\mathfrak{p}} = \frac{1}{2}(-1,0,0,1)^t
    \]
so that $\mathbb{I}^3$ admits a coordinate chart $\psi : \{(\mathbf{l},\mathbf{x},\mathbf{y})\} \to \mathbb{I}^3$ via
    \[
        \psi((\mathbf{l},\mathbf{x},\mathbf{y})) = (\mathbf{l},\mathbf{x},\mathbf{y},\mathbf{l})^t.
    \]
Then the induced metric on isotropic space reads
    \begin{equation}\label{eqn:metric}
        \dif{\mathbf{x}}^2 + \dif{\mathbf{y}}^2.
    \end{equation}
We will refer to both $\{(\mathbf{l},\mathbf{x},\mathbf{y},\mathbf{l})^t\}$ and $\{(\mathbf{l},\mathbf{x},\mathbf{y})\}$ as isotropic $3$-space, and call $\mathbf{l}$--direction ``vertical''.

Then \emph{(spacelike) spheres} $S$ in isotropic $3$-space are given by the affine lightcones via
    \[
        S = \mathbb{I}^3 \cap \mathcal{L}_{\tilde{S}}.
    \]
\emph{Elliptic circles} in isotropic $3$-space are then given by the intersection between spheres and spacelike $2$-planes, so that it is in the shape of a Euclidean ellipse (such that its orthoprojection to $\mathbb{R}^2$ is a circle), while \emph{parabolic circles} are given by the intersection between spheres and vertical $2$-planes, so that it is in the shape of a Euclidean parabola (with vertical axis).

\subsection{Smooth surface theory in isotropic space}\label{sub:two}
For some simply-connected domain $D \subset \mathbb{R}^2$, let $X : D \to \mathbb{I}^3$ be a spacelike immersion.
Viewing $X$ as a codimension two surface in $\mathbb{R}^{3,1}$, every fiber of the normal bundle is a vector space of induced signature $(1,1)$ having $\mathfrak{p}$ as a constant null section.
Thus, we may find a unique null section  $N : D \to \mathcal{L}$ of the normal bundle such that
    \[
        (\dif{X}, N) = 0 \quad\text{and}\quad (N, \mathfrak{p}) = 1.
    \]
This is the \emph{lightlike Gauss map} of $X$.

Under this setting, the first and second fundamental forms of $X$ are given by
    \[
        \dif{s}^2 = (\dif{X}, \dif{X}) \quad\text{and}\quad \mathrm{II} = -(\dif{X}, \dif{N}),
    \]
and the definitions of \emph{(extrinsic) Gauss curvature $K$} and \emph{mean curvature $H$} follow.
In particular, the mean curvature $H$ of a surface $X$ can be given explicitly in terms of the vertical coordinate as follows (see, for example, \cite[Equation~(5.2)]{strubecker_DifferentialgeometrieIsotropenRaumes_1942a}, \cite[Equation~(9)]{pottmann_DiscreteSurfacesIsotropic_2007}, or \cite[Lemma~3.4]{cho_SpinorRepresentationIsotropic_2024}):
\begin{fact}
    Let $X : D \to \mathbb{I}^3$ be a spacelike immersion parametrized by coordinates $(u,v) \in D$ via
        \[
            X(u,v) = (\mathbf{l}(u,v), \mathbf{x}(u,v), \mathbf{y}(u,v)).
        \]
    Then the mean curvature $H$ of $X$ is given by
        \[
            H = \frac{1}{2} \Delta_X \mathbf{l}
        \]
    where $\Delta_X$ denotes the Laplacian with respect to the induced metric on $X$.
\end{fact}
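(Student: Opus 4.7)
The plan is to reduce the assertion to the non-parametric (graph) case and then to a direct computation.

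First, note that both sides of the identity $H = \tfrac{1}{2}\Delta_X \mathbf{l}$ are invariant under reparametrization of $D$: the left side is an extrinsic invariant of the immersion, while the right side is the Laplace--Beltrami operator (an intrinsic scalar operator on $X$) applied to the pullback of the coordinate function $\mathbf{l}$ on $\mathbb{I}^3$. It therefore suffices to verify the identity in a convenient parametrization.

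Since $X$ is spacelike, its tangent planes are transverse to $\mathfrak{p}$, so the orthoprojection $\pi_2\circ X$ is a local diffeomorphism and we may use $(\mathbf{x},\mathbf{y})$ as local coordinates on $X$, writing
\[
    X(\mathbf{x},\mathbf{y}) = (f(\mathbf{x},\mathbf{y}),\mathbf{x},\mathbf{y}),
\]
so that $\mathbf{l} = f$. By \eqref{eqn:metric}, the pulled-back induced metric is simply $\dif{\mathbf{x}}^2 + \dif{\mathbf{y}}^2$, and hence $\Delta_X = \partial_{\mathbf{x}}^2 + \partial_{\mathbf{y}}^2$ in these coordinates.

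Next, using the chart $\psi$ to regard $X$ as a codimension-two surface in $\mathbb{R}^{3,1}$, I would solve the three linear conditions $(X_{\mathbf{x}},N) = (X_{\mathbf{y}},N) = 0$ and $(N,\mathfrak{p}) = 1$ together with the nullity constraint $(N,N) = 0$ to pin down the lightlike Gauss map uniquely as
\[
    N = \bigl(-\tfrac{1}{2}(1 + f_{\mathbf{x}}^2 + f_{\mathbf{y}}^2),\, -f_{\mathbf{x}},\, -f_{\mathbf{y}},\, \tfrac{1}{2}(1 - f_{\mathbf{x}}^2 - f_{\mathbf{y}}^2)\bigr)^t.
\]
Differentiating $(\dif{X},N) = 0$ yields $\mathrm{II}(X_i,X_j) = (X_{ij},N)$; plugging in $X_{\mathbf{x}\mathbf{x}} = (f_{\mathbf{x}\mathbf{x}},0,0,f_{\mathbf{x}\mathbf{x}})^t$ and its analogues, a short computation gives
\[
    \mathrm{II} = f_{\mathbf{x}\mathbf{x}}\,\dif{\mathbf{x}}^2 + 2f_{\mathbf{x}\mathbf{y}}\,\dif{\mathbf{x}}\,\dif{\mathbf{y}} + f_{\mathbf{y}\mathbf{y}}\,\dif{\mathbf{y}}^2.
\]
Tracing against the Euclidean first fundamental form then produces $H = \tfrac{1}{2}(f_{\mathbf{x}\mathbf{x}} + f_{\mathbf{y}\mathbf{y}}) = \tfrac{1}{2}\Delta_X \mathbf{l}$, and the general parametrization follows from the coordinate invariance noted above.

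The only step that requires some care is the normalization of $N$: because $N$ is lightlike rather than unit (with the normalization dictated by $(N,\mathfrak{p}) = 1$ rather than by length), one must verify that the identity $\mathrm{II}(X_i,X_j) = (X_{ij},N)$ is applied with exactly this $N$, so that the trace against $\mathrm{I}$ yields $H$ with the factor $\tfrac{1}{2}$ and not some other constant. Once this normalization is correctly propagated, the remainder of the computation is routine.
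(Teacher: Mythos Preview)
Your argument is correct: the reduction to graph coordinates is justified by the coordinate invariance you note, the explicit form of $N$ checks out against the constraints $(X_{\mathbf{x}},N)=(X_{\mathbf{y}},N)=0$, $(N,\mathfrak{p})=1$, $(N,N)=0$ in signature $(-+++)$, and the second fundamental form computation is clean since $(X_{ij},N)=f_{ij}(d-a)=f_{ij}$ once $d-a=1$ is known.

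As for the comparison: the paper does not actually prove this statement. It is recorded as a \emph{Fact}, with pointers to Strubecker, Pottmann--Liu, and Cho--Lee--Lee--Yang for the derivation; the paper then immediately uses the formula in Example~\ref{exam:sphmean} (the sphere) and in the discussion of graph sums. So there is no ``paper's own proof'' to compare against --- you have supplied one where the authors chose to cite. Your approach is essentially the standard one that the cited references take (compute in the graph parametrization where the induced metric is flat), so nothing is lost or gained methodologically; your write-up simply fills in what the paper outsources.
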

\begin{remark}
    We note here that since the induced metric of $X$ and $x := \pi_2 (X)$ are identical by \eqref{eqn:metric}, the mean curvature $H$ of $X$ can also be found via
        \[
            H = \frac{1}{2} \Delta_x \mathbf{l}.
        \]
\end{remark}
\begin{example}\label{exam:sphmean}
    As an example, let us find the mean curvature of a sphere.
    Let $S \in \mathbb{R}^{3,1}$ be given by
        \[
            \tilde{S} = c + r \tilde{\mathfrak{p}} = (c_0 - \tfrac{r}{2}, c_1, c_2, c_0 + \tfrac{r}{2})^t
        \]
    where $c = \pi_2(S) + \pi_\mathfrak{p}(S)$ and $r \tilde{\mathfrak{p}} = \pi_{\tilde{\mathfrak{p}}}(S)$, so that $S = \mathcal{L}_{\tilde{S}} \cap \mathbb{I}^3$ is a sphere in isotropic space with center $c \in \mathbb{I}^3$ and radius $r \in \mathbb{R}$.
    Then one can calculate directly that $X = (\mathbf{l},\mathbf{x},\mathbf{y},\mathbf{l})^t \in S$ if and only if
        \[
            \mathbf{l} = \frac{1}{2r}((\mathbf{x} - c_1)^2 + (\mathbf{y} - c_2)^2) + c_0.
        \]
    Therefore, we can view sphere $\tilde{S}$ with radius $r$ as the graph of a function
        \[
            f(\mathbf{x}, \mathbf{y}) = \frac{1}{2r}((\mathbf{x} - c_1)^2 + (\mathbf{y} - c_2)^2) + c_0,
        \]
    and calculate that the mean curvature $H$ of the sphere must satisfy
        \[
            H = \frac{1}{2}(f_{\mathbf{x}\mathbf{x}} + f_{\mathbf{y}\mathbf{y}}) = \frac{1}{r}.
        \]
\end{example}

\begin{example}[Weierstrass representations of zero mean curvature surfaces]\label{exam:Wzmc}
    If the mean curvature $H$ of a surface $X$ identically vanishes, then the surface is called a \emph{zero mean curvature surface}, and must locally be the graph of a harmonic function.
    Thus, zero mean curvature surfaces in isotropic $3$-space also admit Weierstrass representation \cite[Equation~(8.31)]{strubecker_DifferentialgeometrieIsotropenRaumes_1942a}: any zero mean curvature surface $X$ can locally be represented as
        \[
            X = \Re \int (g, 1, -\mathbbm{i}) \omega
        \]
    where $g$ is a meromorphic function and $\omega$ is a holomorphic $1$-form such that $g^2 \omega$ is holomorphic.
    Here, $\mathbbm{i}$ denotes the complex imaginary unit.
\end{example}

\subsection{Sum of surfaces as graphs}\label{sub:three}
For $\iota = 1, 2$, let $X_\iota : D \to \mathbb{I}^3$ be spacelike immersions so that $\pi_2(X_1) = \pi_2(X_2) = x$, while $\pi_\mathfrak{p}(X_\iota) =: \mathbf{l}_\iota \mathfrak{p}$ so that
    \[
        X_1 = x + \mathbf{l}_1 \mathfrak{p} \quad\text{and}\quad X_2 = x + \mathbf{l}_2 \mathfrak{p}.
    \]
Then the respective mean curvatures satisfy
    \[
        H_\iota = \frac{1}{2}\Delta_x \mathbf{l}_\iota.
    \]
Now let $X : D \to \mathbb{I}^3$ be the surface obtained by adding $X_1$ and $X_2$ as graphs, denoted by
    \[
        X = X_1 +_{\mathrm{gr}} X_2,
    \]
that is,
    \[
        X = x + (\mathbf{l}_1 + \mathbf{l}_2) \mathfrak{p} =: x + \mathbf{l} \mathfrak{p}.
    \]
Then the mean curvature $H$ of $X$ must be
    \begin{equation}\label{eqn:hgraphsum}
        H = \frac{1}{2} \Delta_x \mathbf{l} = \frac{1}{2} \Delta_x (\mathbf{l}_1 + \mathbf{l}_2) = H_1 + H_2.
    \end{equation}
Thus if a surface $X$ is obtained as a sum of two surfaces $X_1$ and $X_2$ viewed as graphs of functions, then the mean curvature $H$ of the original surface $X$ must be obtained by adding the mean curvatures $H_1$ and $H_2$ of the surfaces $X_1$ and $X_2$, respectively.

\begin{example}[Weierstrass representation of non-zero constant mean curvature (cmc) surface]\label{exam:Wcmc}
    In \cite[Theorem~3.10]{cho_SpinorRepresentationIsotropic_2024}, a Weierstrass representation for cmc-$H$ surfaces were given: any cmc-$H$ surface $Y$ can locally be obtained via
        \[
            Y = \Re \int (\bar{h} + g, 1, -\mathbbm{i}) \omega
        \]
    for some holomorphic $h$, meromorphic $g$, and holomorphic $1$-form $\omega$ such that
    \begin{enumerate}
        \item $g^2 \omega$ is holomorphic, and
        \item $\dif{h} = H\omega$.
    \end{enumerate}

    Let
        \[
            S := \Re \int (\bar{h}, 1, -\mathbbm{i}) \omega \quad\text{and}\quad X := \Re \int (g, 1, -\mathbbm{i}) \omega
        \]
    so that
        \[
            Y = S +_{\mathrm{gr}} X.
        \]
    Then by Example~\ref{exam:Wzmc} we see that $X$ is a zero mean curvature surface, while \cite[\S~3.5]{cho_SpinorRepresentationIsotropic_2024} tells us that $S$ is a sphere with radius $\tfrac{1}{H}$, so that it has mean curvature $H$ by Example~\ref{exam:sphmean}.
    Thus \eqref{eqn:hgraphsum} tells us that $Y$ indeed is a cmc-$H$ surface.
    
    In summary, Weierstrass representation for cmc-$H$ surfaces tells us that any cmc-$H$ surface $Y$ can locally be obtained as
        \begin{equation}\label{eqn:cmcChar}
            Y = S +_{\mathrm{gr}} X
        \end{equation}
    where $S$ is a sphere with mean curvature $H$, while $X$ is a zero mean curvature surface. (See also \cite[Section~7]{seo_ZeroMeanCurvature_2021}.)

    We shall use this characterization of cmc-$H$ surfaces in isotropic space to obtain a discrete analogue of the Weierstrass representation.
\end{example}

\subsection{Circular nets}\label{sub:four}
Now we shift our focus to discrete surfaces in isotropic $3$-space.
Let us assume throughout that $\Sigma \subset \mathbb{Z}^2$ is a discrete simply-connected domain in the sense of \cite[Definition~2.15]{burstall_Discrete$Omega$netsGuichard_2023} with $(m,n) \in \Sigma$.
We will often refer to an edge $((m,n),(m+1,n))$ or $((m,n),(m,n+1))$ as edge $(ij)$, or a face $((m,n),(m+1,n),(m+1,n+1),(m,n+1))$ as an \emph{elementary quadrilateral $(ijk\ell)$}.

For a discrete function $\mathfrak{f}$ defined on $\Sigma$, we will often write for any $i \in \Sigma$,
    \[
        \mathfrak{f}_i := \mathfrak{f}(i).
    \]
Then the discrete $1$-form $\dif{\mathfrak{f}}$ is given via 
    \[
        \dif{\mathfrak{f}}_{ij} := \mathfrak{f}_i - \mathfrak{f}_j
    \]
on any oriented edge $(ij)$.
We can then use
    \[
        \mathfrak{f}_{ij} := \frac{1}{2}(\mathfrak{f}_i + \mathfrak{f}_j),
    \]
to state the Leibniz rule for discrete functions:
\begin{fact}[{\cite[Equation~(2.2)]{burstall_DiscreteSurfacesConstant_2014}}]
    Let $\mathfrak{f}, \mathfrak{g}: \Sigma \to V$ where $V$ is a vector space equipped with a symmetric bilinear product $\odot$.
    Then we have
        \[
            \dif{(\mathfrak{f} \odot \mathfrak{g})}_{ij} = \dif{\mathfrak{f}}_{ij} \odot \mathfrak{g}_{ij} + \mathfrak{f}_{ij} \odot \dif{\mathfrak{g}}_{ij}.
        \]
\end{fact}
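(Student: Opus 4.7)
The plan is to verify the identity by direct expansion from the definitions. I would first unfold the left-hand side: applying $\dif{(\cdot)}_{ij}$ to the pointwise product gives
\[
    \dif{(\mathfrak{f} \odot \mathfrak{g})}_{ij} = \mathfrak{f}_i \odot \mathfrak{g}_i - \mathfrak{f}_j \odot \mathfrak{g}_j.
\]
Then I would expand the right-hand side by substituting $\dif{\mathfrak{f}}_{ij} = \mathfrak{f}_i - \mathfrak{f}_j$, $\dif{\mathfrak{g}}_{ij} = \mathfrak{g}_i - \mathfrak{g}_j$, $\mathfrak{f}_{ij} = \tfrac{1}{2}(\mathfrak{f}_i + \mathfrak{f}_j)$, and $\mathfrak{g}_{ij} = \tfrac{1}{2}(\mathfrak{g}_i + \mathfrak{g}_j)$, and use bilinearity of $\odot$ to distribute.

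Concretely, the first summand on the right yields $\tfrac{1}{2}(\mathfrak{f}_i \odot \mathfrak{g}_i + \mathfrak{f}_i \odot \mathfrak{g}_j - \mathfrak{f}_j \odot \mathfrak{g}_i - \mathfrak{f}_j \odot \mathfrak{g}_j)$, while the second summand yields $\tfrac{1}{2}(\mathfrak{f}_i \odot \mathfrak{g}_i - \mathfrak{f}_i \odot \mathfrak{g}_j + \mathfrak{f}_j \odot \mathfrak{g}_i - \mathfrak{f}_j \odot \mathfrak{g}_j)$. Adding these, the mixed terms $\pm \mathfrak{f}_i \odot \mathfrak{g}_j$ and $\mp \mathfrak{f}_j \odot \mathfrak{g}_i$ cancel, leaving exactly $\mathfrak{f}_i \odot \mathfrak{g}_i - \mathfrak{f}_j \odot \mathfrak{g}_j$, which matches the left-hand side.

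There is no real obstacle: the statement is the polarization-style algebraic identity
\[
    ab - cd = \tfrac{1}{2}(a-c)(b+d) + \tfrac{1}{2}(a+c)(b-d),
\]
which holds in any bilinear setting (note that symmetry of $\odot$ is not strictly needed for this particular identity, since the mixed terms cancel pairwise without reordering). The whole verification is a single line of expansion.
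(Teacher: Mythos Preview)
Your proposal is correct; the identity follows by the one-line bilinear expansion you give, and your observation that symmetry of $\odot$ is not actually needed is accurate. The paper does not supply a proof of this Fact at all---it simply cites it from \cite{burstall_DiscreteSurfacesConstant_2014}---so there is nothing to compare against beyond noting that your direct verification is the standard (and essentially unique) argument.
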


We will focus on the particular case of \emph{circular nets}:
\begin{definition}[{\cite[Theorem~1]{pottmann_DiscreteSurfacesIsotropic_2007}}]\label{def:circular}
    A discrete surface $X : \Sigma \to \mathbb{I}^3$ is called a \emph{circular net}, if on any elementary quadrilateral $(ijk\ell)$, we have $X_i$, $X_j$, $X_k$, and $X_\ell$ lies on an elliptic circle.
    Equivalently, $X$ satisfies the conditions
        \begin{enumerate}
            \item $X_i$, $X_j$, $X_k$, and $X_\ell$ are coplanar, and
            \item for $x = \pi_2(X) : \Sigma \to \mathbb{R}^2$, $x_i$, $x_j$, $x_k$, and $x_\ell$ are concircular
        \end{enumerate}
    on any elementary quadrilateral $(ijk\ell)$.
\end{definition}

To see how curvatures are defined on circular nets, we will discretize the notion of lightlike Gauss maps, and introduce
\emph{discrete lightlike Gauss maps}.
For this, let $X : \Sigma \to \mathbb{I}^3$ be a circular net, and define
    \[
        P := \{X \in \mathbb{R}^{3,1} : (X,\mathfrak{p}) = 1\}.
    \]
Now, let $N : \Sigma \to \mathcal{L} \cap P$ be determined by the condition
    \begin{equation}\label{eqn:dg}
        \dif{X}_{ij} \parallel \dif{N}_{ij} \neq 0
    \end{equation}
on any edge $(ij)$ generically.

We must first check that \eqref{eqn:dg} gives rise to a well-defined $N$, only depending on the choice of $N$ at a single vertex.

\begin{lemma}[cf.\ {\cite[Lemma~4.1.2]{cho_DiscreteIsothermicSurfaces_}}]
    The propogation of $N$ via \eqref{eqn:dg} is consistent.
\end{lemma}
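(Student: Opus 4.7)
The plan is to rewrite the defining condition \eqref{eqn:dg} as a reflection formula in $\mathbb{R}^{3,1}$, and then use the concircularity of the four vertices of any elementary quadrilateral to verify that the reflections compose consistently around it.

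For a single edge $(ij)$, writing $N_j = N_i + \lambda(X_j - X_i)$ with $\lambda \in \mathbb{R}$, the normalization $(N_j, \mathfrak{p}) = 1$ is automatic because $X_j - X_i \in \mathbb{I}^3 = \mathfrak{p}^{\perp}$, while $(N_j, N_j) = 0$ pins $\lambda$ down to the two roots of a quadratic, namely $\lambda = 0$ and $\lambda = -2(N_i, X_j - X_i)/(X_j - X_i, X_j - X_i)$. Since $\dif{N}_{ij} \neq 0$ rules out the trivial root, the propagation along the edge is given by the $\mathbb{R}^{3,1}$-reflection
\[
    N_j \;=\; N_i - \frac{2(N_i, X_j - X_i)}{(X_j - X_i, X_j - X_i)}\,(X_j - X_i) \;=:\; R_{X_j - X_i}(N_i),
\]
through the hyperplane $(X_j - X_i)^{\perp}$; this is well-defined because $(X_j - X_i, X_j - X_i) = |x_j - x_i|^2 \neq 0$ by \eqref{eqn:metric}.

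Consistency on an elementary quadrilateral $(ijk\ell)$ then amounts to showing that $R_{X_k - X_j} \circ R_{X_j - X_i} = R_{X_k - X_\ell} \circ R_{X_\ell - X_i}$ when applied to $N_i$. Let $\Pi$ be the $2$-plane spanned by the (coplanar) differences $X_j - X_i$ and $X_\ell - X_i$, so that all four edge vectors around the quadrilateral lie in $\Pi$. Since $x_i, x_j, x_k, x_\ell$ span a non-degenerate circle in $\mathbb{R}^2$, the restriction $\pi_2|_\Pi : \Pi \to \mathbb{R}^2$ is an isometric isomorphism, and in particular $\Pi$ is a spacelike Euclidean $2$-plane. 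Because each $R_v$ with $v \in \Pi$ fixes $\Pi^{\perp}$ pointwise, decomposing $N_i = N_i^{\Pi} + N_i^{\Pi^{\perp}}$ reduces the desired identity to an agreement of two planar rotations of $\Pi$ acting on $N_i^{\Pi}$.

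To finish, identify $\Pi \cong \mathbb{C}$ via $\pi_2|_\Pi$ and invoke the standard fact that the composition $R_{v_2} \circ R_{v_1}$ of two such reflections is a rotation of $\Pi$ by $2(\arg v_2 - \arg v_1)$. The two rotations then coincide precisely when
\[
    \frac{(X_k - X_j)(X_\ell - X_i)}{(X_j - X_i)(X_k - X_\ell)} \,\in\, \mathbb{R},
\]
and transporting along the isometry $\pi_2|_\Pi$ identifies this expression with the corresponding cross-ratio of $x_i, x_j, x_k, x_\ell$ in $\mathbb{C} \cong \mathbb{R}^2$, which is real by the concircularity clause in Definition~\ref{def:circular}. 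The main bookkeeping nuisance I foresee is reconciling the Leibniz sign convention $\dif{\mathfrak{f}}_{ij} = \mathfrak{f}_i - \mathfrak{f}_j$ with the oriented increments $X_j - X_i$ used here, but the geometric content is entirely carried by the concircularity of the projected vertices, so no further input is required.
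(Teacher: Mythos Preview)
Your argument is correct and takes a genuinely different route from the paper's. The paper normalizes so that the four vertices of the quadrilateral lie on a unit circle in $\mathbb{R}^2 \cong \langle \mathfrak{p},\tilde{\mathfrak{p}}\rangle^\perp$, introduces moving frames $\{\mathcal{T}_\iota,\mathcal{N}_\iota,\mathfrak{p},\tilde{\mathfrak{p}}\}$, and then verifies consistency by explicitly propagating $N$ around the quadrilateral in these coordinates. Your approach instead recognizes the propagation \eqref{eqn:gj} as the $\mathbb{R}^{3,1}$-reflection $R_{X_j - X_i}$, splits off the spacelike $2$-plane $\Pi$ carrying all edge vectors, and reduces the question to the equality of two planar rotations---which you then settle by the classical reality criterion for the cross-ratio. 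The paper's computation is more hands-on and yields the explicit form of $N$ at every vertex; your version is coordinate-free and makes transparent exactly why the concircularity hypothesis in Definition~\ref{def:circular} is the ingredient that closes the loop. One small point worth noting: your claim that the two rotations coincide precisely when the displayed ratio is real is exactly right, since $R_{v_2}\circ R_{v_1}$ acts on $\Pi\cong\mathbb{C}$ as multiplication by $v_2\bar{v}_1/(\bar{v}_2 v_1)$, and equality of the two such factors is equivalent to $q/\bar{q}=1$ for $q = (X_k-X_j)(X_\ell-X_i)/\big((X_j-X_i)(X_k-X_\ell)\big)$---no positivity is needed, only reality.
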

\begin{proof}
To see this, first fix any edge $(ij)$, and let $\alpha$ be some real-valued positive function defined on unoriented edges so that
    \[
        N_i - N_j = \dif{N}_{ij} = -\alpha_{ij} \dif{X}_{ij}.
    \]
To find $\alpha_{ij}$, we use
    \[
        N_j = \alpha_{ij} \dif{X}_{ij} + N_i,
    \]
to calculate
    \[
        0 = (N_j, N_j) = \alpha_{ij}^2 (\dif{X}_{ij}, \dif{X}_{ij}) + 2 \alpha_{ij} (\dif{X}_{ij}, N_i),
    \]
and conclude that
    \[
        \alpha_{ij} = -\frac{2 (\dif{X}_{ij}, N_i)}{(\dif{X}_{ij}, \dif{X}_{ij})}
    \]
since $\alpha_{ij} \neq 0$.
Therefore, on any edge $(ij)$, $N_j$ is uniquely determined from $\dif{X}_{ij}$ and $N_i$ via
    \begin{equation}\label{eqn:gj}
        N_j =  -\frac{2 (\dif{X}_{ij}, N_i)}{(\dif{X}_{ij}, \dif{X}_{ij})} \dif{X}_{ij} + N_i.
    \end{equation}

Now, by applying appropriate rotations and translations, assume without loss of generality that $X_i$, $X_j$, $X_k$, and $X_\ell$ lie on a unit circle in $\mathbb{R}^2 \cong \langle \mathfrak{p},\tilde{\mathfrak{p}}\rangle^\perp$, that is, there is some $\theta_\iota \in \mathbb{R}$ for $\iota \in \{i,j,k,\ell\}$ such that,
    \[
        X_\iota = (0, \cos \theta_\iota, \sin \theta_\iota, 0)^t.
    \]
Defining
    \begin{align*}
        \mathcal{T}_\iota &:= (0, -\sin \theta_\iota, \cos \theta_\iota, 0)^t \\
        \mathcal{N}_\iota &:= (0, -\cos\theta_\iota, -\sin \theta_\iota, 0)^t,
    \end{align*}
and taking $\{\mathcal{T}_\iota, \mathcal{N}_\iota, \mathfrak{p}, \tilde{\mathfrak{p}}\}$ as a basis of $\mathbb{R}^{3,1}$ at $X_\iota$, we see that $N_\iota \in \mathcal{L} \cap P$ implies 
    \begin{equation}\label{eqn:gForm}
        N_\iota = A_\iota \mathcal{T}_\iota + B_\iota \mathcal{N}_\iota - \tfrac{1}{2}(A_\iota^2 + B_\iota^2) \mathfrak{p} +  \tilde{\mathfrak{p}}
    \end{equation}
for some $A_\iota, B_\iota \in \mathbb{R}$.
Under this setting, we use \eqref{eqn:gj} to calculate $N_j$ from $N_i$ and find that
    \begin{equation}\label{eqn:gProp}
        N_j = -A_i \mathcal{T}_j + B_i \mathcal{N}_j - \tfrac{1}{2}(A_i^2 + B_i^2)\mathfrak{p} + \tilde{\mathfrak{p}}.
    \end{equation}

Thus, for any initial choice of $N$ at a single vertex, say $i \in \Sigma$, we can now calculate $N$ around an elementary quadrilateral $(ijk\ell)$ using the propogation formula \eqref{eqn:gProp} to see that
    \begin{align*}
        N_i &= A_i \mathcal{T}_i + B_i \mathcal{N}_i - \tfrac{1}{2}(A_i^2 + B_i^2) \mathfrak{p} + \tilde{\mathfrak{p}} \\
        N_j &= -A_i \mathcal{T}_j + B_i \mathcal{N}_j - \tfrac{1}{2}(A_i^2 + B_i^2) \mathfrak{p} + \tilde{\mathfrak{p}} \\
        N_k &= A_i \mathcal{T}_k + B_i \mathcal{N}_k - \tfrac{1}{2}(A_i^2 + B_i^2) \mathfrak{p} + \tilde{\mathfrak{p}} \\
        N_\ell &= -A_i \mathcal{T}_\ell + B_i \mathcal{N}_\ell - \tfrac{1}{2}(A_i^2 + B_i^2) \mathfrak{p} + \tilde{\mathfrak{p}}.
    \end{align*}
Finally, denoting the propogation of $N$ along the edge $(\ell i)$ from $N_\ell$ as $\tilde{N}_i$, we find that
    \[
        \tilde{N}_i = A_i \mathcal{T}_i + B_i \mathcal{N}_i - \tfrac{1}{2}(A_i^2 + B_i^2) \mathfrak{p} + \tilde{\mathfrak{p}} = N_i
    \]
so that the propogation is consistent.
\end{proof}

\begin{remark}
    Since $X$ and $N$ are edge-parallel, or \emph{parallel nets}, $N$ is also a circular net (see \cite[p.\ 156]{bobenko_DiscreteDifferentialGeometry_2008a} or \cite[Proposition~4.10]{burstall_Discrete$Omega$netsGuichard_2023}).
\end{remark}

\begin{remark}
Note that the expression for $N$ as in \eqref{eqn:gForm} shows that
    \[
        \nu := N - \tilde{\mathfrak{p}}
    \]
not only takes values in $\mathbb{I}^3$, but also it takes values in the unit sphere of isotropic space (see, for example, \cite[Equation~(7.1)]{strubecker_DifferentialgeometrieIsotropenRaumes_1942a}):
    \[
        \mathbf{l} = -\frac{1}{2}(\mathbf{x}^2 + \mathbf{y}^2).
    \]
Such $\nu$ is called the \emph{Gauss map} of $X$, and we have
    \[
        \dif{\nu}_{ij} = \dif{N}_{ij} \parallel \dif{X}_{ij}.
    \]
\end{remark}

Now that lightlike Gauss map $N$ of a circular net $X$ is well-defined, we may consider the definition of mean curvature using the notion of mixed areas:
\begin{definition}[{\cite[p.\ 822]{pember_DiscreteWeierstrasstypeRepresentations_2023} (cf.\ \cite[Lemma~2.12]{burstall_Discrete$Omega$netsGuichard_2023})}]
    Let $X, \tilde{X}: \Sigma \to \mathbb{R}^{3,1}$ be edge-parallel circular nets.
    The $\wedge^2 \mathbb{R}^{3,1}$--valued \emph{mixed area} of $X$ and $\tilde{X}$, denoted by $A(X, \tilde{X})$, is given by
        \[
            A(X, \tilde{X})_{ijk\ell} := \frac{1}{4}((X_i - X_k) \wedge (\tilde{X}_j - \tilde{X}_\ell) - (X_j - X_\ell) \wedge (\tilde{X}_i - \tilde{X}_k)).
        \]
\end{definition}
\begin{remark}
    It follows directly that the notion of mixed area is bilinear on the space of pairwise edge-parallel circular nets.
\end{remark}

Using the notion mixed areas, mean curvature of a circular net $X$ is given via the lightlike Gauss map $N$:
\begin{definition}[{\cite[Definition~3.2]{pember_DiscreteWeierstrasstypeRepresentations_2023}}]\label{def:mean}
    Let $X: \Sigma \to \mathbb{I}^3$ be a circular net with lightlike Gauss map $N$.
    The mean curvature $H$ of $X$ over an elementary quadrilateral $(ijk\ell)$ is given by the relation
        \[
            A(X,N)_{ijk\ell} + H_{ijk\ell} A(X, X)_{ijk\ell} = 0.
        \]
\end{definition}
\begin{remark}
    Since for any diagonal $(ik)$ over an elementary quadrilateral $(ijk\ell)$, we have
        \[
            N_i - N_k = \nu_i - \nu_k
        \]
    where $\nu = N - \tilde{\mathfrak{p}}$ is the corresponding Gauss map, we can rewrite the mean curvature as in \cite[Examples~3.3(ii)]{pember_DiscreteWeierstrasstypeRepresentations_2023}:
        \begin{equation}\label{eqn:meanCurv1}
            A(X,\nu)_{ijk\ell} + H_{ijk\ell} A(X, X)_{ijk\ell} = 0.
        \end{equation}
    
    Now that the mixed areas in the expression for the mean curvature \eqref{eqn:meanCurv1} only involves circular nets in $\mathbb{I}^3$, we can use \eqref{eqn:metric} to further simplify the expression as in \cite[Equation~(17)]{pottmann_DiscreteSurfacesIsotropic_2007} in terms of $x := \pi_2(X)$ and $n := \pi_2(\nu)$:
        \begin{equation}\label{eqn:meanCurv2}
            A(x,n)_{ijk\ell} + H_{ijk\ell} A(x, x)_{ijk\ell} = 0.
        \end{equation}
\end{remark}

\subsection{Discrete isothermic surfaces}\label{sub:five}
Smooth isothermic surfaces are characterized as surfaces that admit conformal curvature line coordinates away from umbilics; as circular nets are discretizations of curvature line parametrized surfaces \cite{nutbourne_DifferentialGeometryApplied_1988}, discrete isothermic surfaces are defined as a subclass of circular nets using the notion of discrete holomorphic functions in $\mathbb{C}$:
\begin{definition}[{\cite[Definition~8]{bobenko_DiscreteIsothermicSurfaces_1996}}]\label{def:discHolo}
    A discrete map $x : \Sigma \to \mathbb{C} \cong \mathbb{R}^2$ is called a \emph{discrete holomorphic function} if on every elementary quadrilateral, we have
        \[
            \mathrm{cr}(x_i, x_j, x_k, x_\ell) := \frac{x_i- x_j}{x_j- x_k}\frac{x_k- x_\ell}{x_\ell- x_i} = \frac{m_{i\ell}}{m_{ij}}
        \]
    for some real non-vanishing discrete function $m$ defined on unoriented edges satisfying the \emph{edge-labeling} property, that is,
        \[
            m_{ij} = m_{\ell k} \quad\text{and}\quad m_{i\ell} = m_{jk}.
        \]
    The discrete function $m$ is called a \emph{cross-ratios factorizing function} of $x$, and is determined up to multiplication by a constant.
\end{definition}
% \begin{remark}
%     We will assume without loss of generality that $m_{ij} = -m_{i\ell}$ throughout the paper (see also \cite[p.\ 196]{bobenko_DiscreteIsothermicSurfaces_1996}).
% \end{remark}
\begin{definition}[{\cite[Theorem~3]{pottmann_DiscreteSurfacesIsotropic_2007}}]\label{def:isothermic}
    A circular net $X : \Sigma \to \mathbb{I}^3$ is called a \emph{discrete isothermic surface} if $x := \pi_2(X): \Sigma \to \mathbb{R}^2 \cong \mathbb{C}$ is a discrete holomorphic function.
\end{definition}

These notions can be combined to give the Weierstrass representation for discrete zero mean curvature surfaces:
\begin{fact}[{\cite[Theorem~4.1]{pember_DiscreteWeierstrasstypeRepresentations_2023} (cf.\ \cite[\S~4]{pottmann_DiscreteSurfacesIsotropic_2007})}]\label{fact:Wrepmin}
    Let $g: \Sigma \to \mathbb{C}$ be a discrete holomorphic function. 
    If $X: \Sigma \to \mathbb{I}^3$ is given recursively via
        \[
            \dif{X}_{ij} = \frac{1}{m_{ij}} \Re\left((g_{ij}, 1, -\mathbbm{i}) \frac{1}{\dif{g}_{ij}}\right),
        \]
    then $X$ is a discrete isothermic zero mean curvature surface.
\end{fact}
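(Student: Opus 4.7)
The plan is to verify four things in order: that the recursive definition of $X$ closes around every elementary quadrilateral $(ijk\ell)$ so $X$ is well-defined; that the planar projection $x := \pi_2(X) : \Sigma \to \mathbb{C}$ is a discrete holomorphic function with the same cross-ratio factorizing function $m$ as $g$; that each face of $X$ is coplanar in $\mathbb{I}^3$, so $X$ is a circular net and hence a discrete isothermic surface by Definition~\ref{def:isothermic}; and that the mean curvature vanishes on each face via the mixed area formula~\eqref{eqn:meanCurv2}.

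For the first two steps, I would split $\dif X_{ij}$ into its planar part (components two and three) and vertical part (component one). Under $\mathbb{R}^2 \cong \mathbb{C}$, the planar increment reads $\dif x_{ij} = 1/(m_{ij}\dif g_{ij})$, so closure around $(ijk\ell)$ reduces to $\sum_{\mathrm{cyclic}} 1/(m_{ij}\dif g_{ij}) = 0$. Setting $T := m_{ij}\,\dif g_{ij}\,\dif g_{k\ell}$, the cross-ratio condition on $g$ combined with the edge-labeling of $m$ yields $T = m_{jk}\,\dif g_{jk}\,\dif g_{\ell i}$, after which multiplying the cyclic sum by $T$ telescopes to $\dif g_{ij} + \dif g_{jk} + \dif g_{k\ell} + \dif g_{\ell i} = 0$. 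For the vertical component, the identity $\frac{g_i + g_j}{g_i - g_j} + \frac{g_k + g_\ell}{g_k - g_\ell} = \frac{2(g_i g_k - g_j g_\ell)}{(g_i - g_j)(g_k - g_\ell)}$ (and its analogue on the other pair of opposite edges) similarly invokes the cross-ratio to force cancellation before taking real parts. Having established well-definedness, the discrete holomorphicity of $x$ is then immediate: computing $\mathrm{cr}(x_i, x_j, x_k, x_\ell)$ directly from $\dif x_{ij} = 1/(m_{ij}\dif g_{ij})$ yields $m_{i\ell}/m_{ij}$.

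For the circular net property, I would rewrite the vertical formula in complex notation as $\dif \mathbf{l}_{ij} = \frac{1}{2}\Re\bigl((g_i + g_j)\,\dif x_{ij}\bigr)$ and verify that the $4 \times 4$ coplanarity determinant of $(X_i, X_j, X_k, X_\ell)$ vanishes; after row-reducing against the first row and expanding, the vanishing reduces to a $3 \times 3$ identity that follows from the same cross-ratio and edge-labeling relations already used. Finally, for zero mean curvature, construct the planar Gauss map $n := \pi_2(\nu) : \Sigma \to \mathbb{C}$ from $g$ via discrete stereographic projection, so that $\dif n_{ij}$ is a real multiple of $\dif x_{ij}$, extend to the full $\nu : \Sigma \to \mathbb{I}^3$ using \eqref{eqn:gForm}, and show by direct calculation that $A(x, n)_{ijk\ell} = 0$ on each face, so that \eqref{eqn:meanCurv2} forces $H_{ijk\ell} = 0$. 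I expect the main obstacle to be the coplanarity check: unlike the purely algebraic closure identities for well-definedness and holomorphicity, it requires combining the concircularity of the planar shadow with the specific complex structure of the vertical lift, and the algebra there is least transparent.
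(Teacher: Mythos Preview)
The paper does not prove this statement; it is quoted as a known \textbf{Fact} from \cite{pember_DiscreteWeierstrasstypeRepresentations_2023} and \cite{pottmann_DiscreteSurfacesIsotropic_2007}, so there is no in-paper argument to compare against.

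Your outline is sound and the computations you spell out are correct. The closure argument for the planar part via the common product $T = m_{ij}\,\dif g_{ij}\,\dif g_{k\ell} = m_{jk}\,\dif g_{jk}\,\dif g_{\ell i}$ is exactly right, and your identity for the vertical part reduces the cyclic sum to $2(g_ig_k - g_jg_\ell)\bigl(\tfrac{1}{T} - \tfrac{1}{T}\bigr) = 0$ before taking real parts, which is clean. The cross-ratio of $x$ does come out to $m_{i\ell}/m_{ij}$ as you claim; this is just the statement that $x$ is the Christoffel dual of $g$ in $\mathbb{C}$ (cf.\ \eqref{eqn:defh} later in the paper with $H=1$), and you could cite that directly.

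One structural suggestion: you flag coplanarity as the bottleneck and propose attacking a $3\times 3$ determinant, but the machinery you set up in step~4 already gives a cleaner route. Once you write down $\nu = -\tfrac{1}{2}(|g|^2,\,2\Re g,\,-2\Im g)$ and verify (as you plan to) that $\dif\nu_{ij} = -m_{ij}|\dif g_{ij}|^2\,\dif X_{ij}$ on every edge, you have that $X$ and $\nu$ are edge-parallel. But $\nu$ lies on the isotropic unit sphere with planar projection $-\bar g$, which is discrete holomorphic, so the sphere-$\cap$-cylinder argument of Lemma~\ref{lemm:Scirc2} shows $\nu$ is circular; edge-parallelism then transports circularity to $X$ (as noted in the Remark following the propagation lemma). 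This dispatches step~3 with no determinant algebra and makes steps~3 and~4 a single package. Your step~4 itself is fine: with $n = \pi_2(\nu) = -\bar g$ and $x$ the Christoffel dual of $g$, the vanishing $A(x,n) = -A(x,\bar g) = 0$ is exactly Remark~\ref{rema:dual}, so \eqref{eqn:meanCurv2} forces $H\equiv 0$.
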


\section{Discrete cmc surfaces in isotropic space}\label{sect:main}
In this section, we will use the characterization of smooth cmc-$H$ surfaces in isotropic $3$-space obtained in Example~\ref{exam:Wcmc} to obtain a Weierstrass representation for discrete cmc surfaces in isotropic space, and show that these discrete surfaces indeed have constant mean curvature by choosing a suitable lightlike Gauss map.

\subsection{Adding discrete surfaces as graphs}
As the characterization of smooth cmc-$H$ surfaces obtained in  Example~\ref{exam:Wcmc} depend on the notion of adding two surfaces as graphs, we will first define a similar notion for discrete surfaces:
\begin{definition}
    Let $X_1, X_2: \Sigma \to \mathbb{I}^3$ be discrete surfaces such that
        \[
            \pi_2(X_1) = \pi_2(X_2) = x : \Sigma \to \mathbb{R}^2
        \]
    so that there exists functions $\mathbf{l}_1, \mathbf{l}_2: \Sigma \to \mathbb{R}$ with
        \[
            X_1 = x + \mathbf{l}_1 \mathfrak{p} \quad\text{and}\quad X_2 = x + \mathbf{l}_2 \mathfrak{p}.
        \]
    Then the discrete surface $X: \Sigma \to \mathbb{I}^3$ obtained via
        \[
            X := x + (\mathbf{l}_1 + \mathbf{l}_2) \mathfrak{p}
        \]
    is called a \emph{sum of $X_1$ and $X_2$ as graphs}, denoted as
        \[
            X = X_1 +_\mathrm{gr} X_2.
        \]
\end{definition}

The definition of circular nets in Definition~\ref{def:circular} and discrete isothermic surfaces in Definition~\ref{def:isothermic} immediately yields the following:
\begin{proposition}\label{prop:circAdd}
    Let $X_1, X_2: \Sigma \to \mathbb{I}^3$ be circular nets with $\pi_2(X_1) = \pi_2(X_2)$.
    If $X = X_1 +_\mathrm{gr} X_2$, then $X$ is also a circular net.
    Furthermore, if $X_1$ and $X_2$ are discrete isothermic surfaces, then $X$ is also a discrete isothermic surface.
\end{proposition}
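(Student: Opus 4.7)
The plan is to verify the two conditions of Definition~\ref{def:circular} for $X$ on each elementary quadrilateral, and then observe that the assertion about discrete isothermic surfaces follows immediately from Definition~\ref{def:isothermic}. Writing $X = x + (\mathbf{l}_1 + \mathbf{l}_2)\mathfrak{p}$, we see that $\pi_2(X) = x = \pi_2(X_1) = \pi_2(X_2)$, so the second (concircularity) condition for $X$ is inherited directly from the same condition for $X_1$.

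The first condition, coplanarity of $X_i, X_j, X_k, X_\ell$ over an elementary quadrilateral $(ijk\ell)$, will be handled by splitting into two cases based on the configuration of the projections $x_\iota \in \mathbb{R}^2$. First, if $x_i, x_j, x_k, x_\ell$ are collinear, then for any heights $\mathbf{l}_\iota \in \mathbb{R}$ the lifts $x_\iota + \mathbf{l}_\iota \mathfrak{p}$ all lie in the vertical $2$-plane spanned by the common line and $\mathfrak{p}$, so $X$ is coplanar automatically. Otherwise, the coplanarity of $X_1$ combined with the non-collinearity of its projections forces the plane through $X_{1,i}, X_{1,j}, X_{1,k}, X_{1,\ell}$ to be non-vertical, hence expressible as the graph $\mathbf{l} = L_1(\mathbf{x}, \mathbf{y})$ of some affine function $L_1$; the same reasoning supplies an affine $L_2$ with $\mathbf{l}_{2,\iota} = L_2(x_\iota)$ on the four vertices. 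Then $\mathbf{l}_{1,\iota} + \mathbf{l}_{2,\iota} = (L_1 + L_2)(x_\iota)$ is the restriction of an affine function, so the four points $X_\iota$ all lie on the graph of $L_1 + L_2$, which is a plane.

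For the second assertion, assuming both $X_1$ and $X_2$ are discrete isothermic, Definition~\ref{def:isothermic} gives that $\pi_2(X_1) = x$ is a discrete holomorphic function, and since $\pi_2(X) = x$ as well, $X$ automatically satisfies the discrete holomorphicity condition. Combined with the first part showing $X$ is a circular net, this yields that $X$ is discrete isothermic.

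No significant obstacle is expected; the argument reduces entirely to the elementary facts that a non-vertical plane in $\mathbb{I}^3$ is the graph of an affine function on $\mathbb{R}^2$ and that sums of affine functions remain affine. The only mild subtlety is the degenerate configuration of collinear projections, which is treated separately above and causes no issue.
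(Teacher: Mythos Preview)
Your argument is correct and is precisely the verification the paper has in mind when it says the proposition ``immediately'' follows from Definitions~\ref{def:circular} and~\ref{def:isothermic}; you simply spell out the affine-graph reasoning behind the coplanarity check. The collinear-projection case you isolate cannot actually arise for a circular net (an elliptic circle projects to a genuine Euclidean circle, not a line), so that branch is harmless but unnecessary.
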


\subsection{Construction of the discrete surface}\label{sub:3one}
We will construct a discrete surface $Y : \Sigma \to \mathbb{I}^3$ from a discrete isothermic zero mean curvature surface $X: \Sigma \to \mathbb{I}^3$ following the smooth characterization in \eqref{eqn:cmcChar}, that is, 
    \[
        Y := X +_\mathrm{gr} S
    \]
for some mapping $S$ into a sphere in isotropic space.

To start, let us fix some discrete isothermic zero mean curvature surface $X: \Sigma \to \mathbb{I}^3$ arising from a discrete holomorphic function $g: \Sigma \to \mathbb{C}$ with cross-ratios factorizing function $m$ via Weierstrass representation in Fact~\ref{fact:Wrepmin}, that is,
    \begin{equation}\label{eqn:dxij}
        \dif{X}_{ij} = \frac{1}{m_{ij}} \Re\left((g_{ij}, 1, -\mathbbm{i}) \frac{1}{\dif{g}_{ij}}\right).
    \end{equation}
Now let $S : \Sigma \to \mathbb{I}^3$ be a map into a sphere of non-zero mean curvature $H \neq 0$.
We may assume without loss of generality that $S$ is centered at the origin, so that
    \[
        S : \mathbf{l} = \frac{H}{2}(\mathbf{x}^2 + \mathbf{y}^2).
    \]
To add $X$ and $S$ as graphs we require that $\pi_2(S) = \pi_2(X) =: x$, allowing us to write
    \[
        S = x + \frac{H}{2} (x,x)\mathfrak{p}.
    \]
In fact, so obtained $S$ is a circular net:
\begin{lemma}\label{lemm:Scirc2}
    $S$ is a circular net.
\end{lemma}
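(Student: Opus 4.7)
The plan is to verify directly the two equivalent conditions listed in Definition~\ref{def:circular} on an arbitrary elementary quadrilateral $(ijk\ell)$. The second condition is immediate from the construction: by definition $\pi_2(S) = x = \pi_2(X)$, and since $X$ arises from the Weierstrass representation of Fact~\ref{fact:Wrepmin} it is a circular net, so $x_i, x_j, x_k, x_\ell$ are automatically concircular in $\mathbb{R}^2$. Hence it suffices to establish the coplanarity of $S_i, S_j, S_k, S_\ell$ in $\mathbb{I}^3$.

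The key observation for coplanarity is that restricting the paraboloidal function $\mathbf{l} = \frac{H}{2}(\mathbf{x}^2+\mathbf{y}^2)$ to any circle in the $\mathbb{R}^2$-plane yields an affine function of $(\mathbf{x}, \mathbf{y})$. Indeed, on each elementary quadrilateral let $C \subset \mathbb{R}^2$ be the circle through $x_i, x_j, x_k, x_\ell$, say with center $(a,b)$ and radius $r$. Expanding the equation $(\mathbf{x} - a)^2 + (\mathbf{y} - b)^2 = r^2$ gives
$$\mathbf{x}^2 + \mathbf{y}^2 = 2a\mathbf{x} + 2b\mathbf{y} + (r^2 - a^2 - b^2)$$
on $C$. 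Substituting into the defining equation of the sphere produces
$$\mathbf{l} = Ha\mathbf{x} + Hb\mathbf{y} + \tfrac{H}{2}(r^2 - a^2 - b^2)$$
at each of the four vertices $S_\iota$ for $\iota \in \{i, j, k, \ell\}$. Thus all four points lie on the (non-vertical, hence spacelike) affine plane cut out by this affine relation, establishing coplanarity.

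Combining the two verified conditions yields the conclusion via Definition~\ref{def:circular}. I do not anticipate any genuine obstacle here: the lemma is essentially the discrete shadow of the classical fact that the intersection of a sphere with a (spacelike) plane in $\mathbb{I}^3$ is an elliptic circle, and the computation above is just that fact in coordinates.
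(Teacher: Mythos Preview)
Your proof is correct and follows essentially the same approach as the paper: both arguments reduce coplanarity to the observation that substituting the circle equation $(\mathbf{x}-a)^2+(\mathbf{y}-b)^2=r^2$ into the paraboloid $\mathbf{l}=\tfrac{H}{2}(\mathbf{x}^2+\mathbf{y}^2)$ yields an affine relation in $(\mathbf{x},\mathbf{y},\mathbf{l})$, while concircularity of the projections is inherited from $X$. The only cosmetic difference is that the paper phrases this as the intersection of a sphere and a circular cylinder lying on a plane.
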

\begin{proof}
    For any elementary quadrilateral $(ijk\ell)$, we will show that $S_i$, $S_j$, $S_k$, and $S_\ell$ are coplanar.
    As we know that $x_i$, $x_j$, $x_k$, and $x_\ell$ are concircular since $X$ is discrete isothermic, we can deduce that $S_i$, $S_j$, $S_k$, and $S_\ell$ must lie on the intersection of the sphere
        \begin{equation}\label{eqn:sphC1}
            \mathbf{l} = \tfrac{H}{2}(\mathbf{x}^2 + \mathbf{y}^2)
        \end{equation}
    and the circular cylinder
        \begin{equation}\label{eqn:sphC2}
            (\mathbf{x}-x_1)^2 + (\mathbf{y}-x_2)^2 = r^2
        \end{equation}
    for some real constants $x_1$, $x_2$, and $r > 0$.

    From \eqref{eqn:sphC2}, we get
        \[
            \mathbf{x}^2 + \mathbf{y}^2  = 2x_1 \mathbf{x} - x_1^2  + 2x_2 \mathbf{y} - x_2^2 + r^2,
        \]
    and substituting into \eqref{eqn:sphC1}, we have
        \[
            \mathbf{l} = H x_1 \mathbf{x} + H x_2 \mathbf{y} + \tfrac{H}{2}(r^2 - x_1^2  - x_2^2).
        \]
    Thus, the intersection of a sphere and a circular cylinder must lie on a plane.
\end{proof}

Now we wish to find an explicit parametrization of $S$ in terms of $g$.
To achieve this, first note that since $g$ is a discrete holomorphic function, it is a discrete isothermic surface; thus $g$ admits a Christoffel dual $h : \Sigma \to \mathbb{C}$ (see \cite[Theorem~6]{bobenko_DiscreteIsothermicSurfaces_1996} or \cite[Equation~(1.5)]{agafonov_AsymptoticBehaviorDiscrete_2005}), that is, $h$ is well-defined recursively via
    \begin{equation}\label{eqn:defh}
        \dif{h}_{ij} := \frac{H}{m_{ij}\dif{g}_{ij}}.
    \end{equation}
Such $h$ is also a discrete holomorphic function with cross-ratios factorizing function $\tilde{m} = \frac{1}{m}$.

\begin{remark}\label{rema:dual}
    Since $h$ is a Christoffel dual of $g$ in the complex plane, we have that $h$ is a Christoffel dual of $\bar{g}$ when viewed as maps into $\mathbb{R}^2$ (see \cite[\S~5.2.2]{hertrich-jeromin_IntroductionMobiusDifferential_2003}).
    Thus, by \cite[Equation~(8)]{pottmann_GeometryMultilayerFreeform_2007}, \cite[Theorem~4.42]{bobenko_DiscreteDifferentialGeometry_2008a}, or \cite[Theorem~4.11]{burstall_Discrete$Omega$netsGuichard_2023}, we have
        \[
            A(h, \bar{g}) \equiv 0.
        \]
\end{remark}
Using $h$, we can now rewrite $\dif{X}_{ij}$ in \eqref{eqn:dxij} as
    \begin{equation}\label{eqn:dXijdone}
        \dif{X}_{ij} = \frac{1}{m_{ij}} \Re\left((g_{ij}, 1, -\mathbbm{i}) \frac{1}{\dif{g}_{ij}}\right) = \frac{1}{H} \Re\left((g_{ij}, 1, -\mathbbm{i}) \dif{h}_{ij}\right),
    \end{equation}
which immediately yields
    \[
        \dif{x}_{ij} = \frac{1}{H}\Re\left((0, 1, -\mathbbm{i}) \dif{h}_{ij}\right).
    \]
Therefore, without loss of generality, we may assume that
    \begin{equation}\label{eqn:expx}
        x = \frac{1}{H}\Re((0,1,-\mathbbm{i})h) = \frac{1}{H}(0, \Re h, \Im h).
    \end{equation}

This allows us to write $S$ explicitly since
    \[
        (x,x) = \frac{1}{H^2}((\Re h)^2 + (\Im h)^2) = \frac{|h|^2}{H^2}
    \]
and thus
    \[
        S = x + \tfrac{H}{2} (x,x)\mathfrak{p} = \left(\tfrac{1}{2H} |h|^2, \tfrac{1}{H}\Re h, \tfrac{1}{H}\Im h\right) = \tfrac{1}{H}\Re \left(\left(\tfrac{1}{2} \bar{h}, 1, -\mathbbm{i}\right) h\right).
    \]

Using the fact that
    \begin{align}
        \Re(\bar{h}_{ij}\dif{h}_{ij})
            &= \frac{1}{2}\Re(|h_i|^2 - \bar{h}_i h_j + h_i \bar{h}_j - |h_j|^2) \nonumber\\
            &= \frac{1}{2}\Re(|h_i|^2 + 2 \mathbbm{i} \Im(h_i \bar{h}_j) - |h_j|^2)\nonumber\\
            &= \frac{1}{2}\Re(|h_i|^2 - |h_j|^2), \label{eqn:weird}
    \end{align}
we can verify
    \begin{align}
        \dif{S}_{ij}
            &= \tfrac{1}{H}\Re \left(\left(\tfrac{1}{2} \bar{h}_i, 1, -\mathbbm{i}\right) h_i\right) - \Re \left(\left(\tfrac{1}{2} \bar{h}_j, 1, -\mathbbm{i}\right) h_j\right) \nonumber\\
            &= \tfrac{1}{H}\Re \left(\left(\tfrac{1}{2} |h_i|^2, h_i, -\mathbbm{i} h_i\right) - \left(\tfrac{1}{2} |h_j|^2, h_j, -\mathbbm{i}h_j \right)\right) \nonumber\\
            &= \tfrac{1}{H}\Re \left(\bar{h}_{ij}\dif{h}_{ij}, \dif{h}_{ij}, -\mathbbm{i} \dif{h}_{ij}\right) \nonumber\\
            &= \tfrac{1}{H}\Re \left(\left(\bar{h}_{ij}, 1 , -\mathbbm{i} \right)\dif{h}_{ij}\right) \label{eqn:dSijdone}.
    \end{align}
Therefore, if $Y := X +_\mathrm{gr} S$, then \eqref{eqn:dXijdone} and \eqref{eqn:dSijdone} implies
    \[
        \dif{Y}_{ij} = \frac{1}{H} \Re\left((\bar{h}_{ij} + g_{ij}, 1, -\mathbbm{i}) \dif{h}_{ij}\right).
    \]
Finally, Proposition~\ref{prop:circAdd} and Lemma~\ref{lemm:Scirc2} allows us to conclude $Y$ is a discrete isothermic surface.
Summarizing:
\begin{proposition}
    For a discrete holomorphic function $g: \Sigma \to \mathbb{C}$ with cross-ratios factorizing function $m$, a discrete isothermic surface $Y : \Sigma \to \mathbb{I}^3$ can be defined recursively via
    \begin{equation}\label{eqn:cmcWrepT}
        \dif{Y}_{ij} = \frac{1}{H} \Re\left((\bar{h}_{ij} + g_{ij}, 1, -\mathbbm{i}) \dif{h}_{ij}\right)
    \end{equation}
    where
        \[
            \dif{h}_{ij} = \frac{H}{m_{ij}\dif{g}_{ij}}.
        \]
\end{proposition}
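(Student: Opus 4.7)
The plan is to build $Y$ as a graphical sum $X +_\mathrm{gr} S$, where $X$ is the discrete zero mean curvature surface arising from $g$ via Fact~\ref{fact:Wrepmin}, and $S$ is a discrete surface lying on a single isotropic sphere of mean curvature $H$. This mirrors exactly the smooth characterization \eqref{eqn:cmcChar} used in Example~\ref{exam:Wcmc}.

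First I would introduce the Christoffel dual $h$ of $g$, defined recursively by $\dif{h}_{ij} := \tfrac{H}{m_{ij}\dif{g}_{ij}}$, whose consistency around quadrilaterals follows from the cross-ratio condition on $g$. Substituting this definition into the Weierstrass formula for $X$ in Fact~\ref{fact:Wrepmin} gives
\[
\dif{X}_{ij} = \tfrac{1}{H}\Re\bigl((g_{ij},1,-\mathbbm{i})\dif{h}_{ij}\bigr),
\]
which in particular fixes $x := \pi_2(X)$ up to translation as $x = \tfrac{1}{H}(0,\Re h,\Im h)$. I can then explicitly define
\[
S := x + \tfrac{H}{2}(x,x)\mathfrak{p} = \tfrac{1}{H}\Re\bigl((\tfrac{1}{2}\bar{h},1,-\mathbbm{i})h\bigr),
\]
so that $S$ maps into the sphere $\mathbf{l} = \tfrac{H}{2}(\mathbf{x}^2+\mathbf{y}^2)$ and shares horizontal projection with $X$, making the graphical sum $Y := X +_\mathrm{gr} S$ well-defined.

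Next I would verify that $S$ is a circular net: since $x$ is concircular on every elementary quadrilateral (as $X$ is discrete isothermic), the four points of $S$ lie in the intersection of the sphere with a circular cylinder over that circle, and such an intersection is planar—this is exactly Lemma~\ref{lemm:Scirc2}. Together with the fact that $X$ is circular, Proposition~\ref{prop:circAdd} then ensures $Y$ is circular. Moreover, since both $X$ and $S$ share the same horizontal projection $x$, and $x$ is discrete holomorphic (being the projection of the discrete isothermic $X$), the sum $Y$ inherits the discrete isothermic property directly from Proposition~\ref{prop:circAdd}.

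The only computational step with any substance is expressing $\dif{S}_{ij}$ in the desired form. The expected obstacle here is handling the vertical component: one must show that $\Re(\bar h_{ij}\dif h_{ij}) = \tfrac{1}{2}(|h_i|^2 - |h_j|^2)$, which follows because $|h_i|^2 - \bar{h}_i h_j + h_i\bar{h}_j - |h_j|^2$ has imaginary cross-term $2\mathbbm{i}\Im(h_i\bar h_j)$ that drops under $\Re$. With this identity in hand, a direct expansion yields
\[
\dif{S}_{ij} = \tfrac{1}{H}\Re\bigl((\bar{h}_{ij},1,-\mathbbm{i})\dif{h}_{ij}\bigr),
\]
and adding this to the expression for $\dif{X}_{ij}$ produces the claimed formula \eqref{eqn:cmcWrepT} for $\dif{Y}_{ij}$, completing the proof.
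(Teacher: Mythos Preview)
Your proposal is correct and follows essentially the same approach as the paper: you construct $Y = X +_\mathrm{gr} S$ from the discrete zero mean curvature surface $X$ and the sphere map $S$, introduce the Christoffel dual $h$, verify the key identity $\Re(\bar h_{ij}\dif h_{ij}) = \tfrac{1}{2}(|h_i|^2 - |h_j|^2)$ to compute $\dif S_{ij}$, and invoke Lemma~\ref{lemm:Scirc2} and Proposition~\ref{prop:circAdd} for isothermicity. The only cosmetic difference is ordering---the paper introduces $S$ before $h$---but the content and the one nontrivial computation are identical.
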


\subsection{Construction of the lightlike Gauss map}\label{sub:3two}
We will now construct the lightlike Gauss map $N : \Sigma \to \mathcal{L} \cap P$ of the discrete isothermic surface $Y : \Sigma \to \mathbb{I}^3$ obtained via \eqref{eqn:cmcWrepT}.

Defining $\varphi := \bar{h} + g$, if we take
    \[
        N := -\frac{1}{2}(1 + |\varphi|^2, 2 \Re \varphi, -2 \Im \varphi, -1 + |\varphi|^2)^t,
    \]
then we can directly verify that
    \[
        (N,N) = 0 \quad\text{and}\quad (N, \mathfrak{p}) = 1.
    \]
We will use $\nu = N - \tilde{\mathfrak{p}} : \Sigma \to \mathbb{I}^3$ given by
    \begin{equation}\label{eqn:expnu}
        \nu = -\frac{1}{2}(|\varphi|^2, 2\Re \varphi, -2 \Im \varphi)
    \end{equation}
to show the following:
\begin{lemma}\label{lemma:Nprop}
    For a real function $\beta$ defined on unoriented edges of $\Sigma$ given by
        \[
            \beta_{ij} := -m_{ij}|{\dif{g}_{ij}}|^2 - H,
        \]
    we have
        \[
            \dif{N}_{ij} = \dif{\nu}_{ij} = \beta_{ij} \dif{Y}_{ij}.
        \]
\end{lemma}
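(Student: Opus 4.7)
The first equality $\dif{N}_{ij} = \dif{\nu}_{ij}$ is immediate since $N = \nu + \tilde{\mathfrak{p}}$ and $\tilde{\mathfrak{p}}$ is constant, so the real work is showing $\dif{\nu}_{ij} = \beta_{ij}\dif{Y}_{ij}$.

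My plan is to expand both sides as explicit vectors in $\mathbb{I}^3$ and match them component by component. Using the Leibniz rule on $|\varphi|^2 = \varphi\bar\varphi$, one gets $\dif{(|\varphi|^2)}_{ij} = 2\Re(\bar\varphi_{ij}\dif{\varphi}_{ij})$, so
\[
    \dif{\nu}_{ij} = -\bigl(\Re(\bar\varphi_{ij}\dif{\varphi}_{ij}),\ \Re\dif{\varphi}_{ij},\ -\Im\dif{\varphi}_{ij}\bigr).
\]
On the other side,
\[
    \beta_{ij}\dif{Y}_{ij} = \tfrac{\beta_{ij}}{H}\Re\bigl((\varphi_{ij},1,-\mathbbm{i})\dif{h}_{ij}\bigr) = \tfrac{\beta_{ij}}{H}\bigl(\Re(\varphi_{ij}\dif{h}_{ij}),\ \Re\dif{h}_{ij},\ \Im\dif{h}_{ij}\bigr).
\]

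The bridge between the two is the identity
\[
    \dif{\varphi}_{ij} = -\tfrac{\beta_{ij}}{H}\,\overline{\dif{h}_{ij}},
\]
which I plan to derive as follows. From the defining relation $\dif{h}_{ij}\dif{g}_{ij} = H/m_{ij}$ we get $\dif{g}_{ij} = H\,\overline{\dif{h}_{ij}}/(m_{ij}|\dif{h}_{ij}|^2)$, and substituting $m_{ij}|\dif{h}_{ij}|^2 = H^2/(m_{ij}|\dif{g}_{ij}|^2)$ shows $\dif{\varphi}_{ij} = \overline{\dif{h}_{ij}} + \dif{g}_{ij} = \overline{\dif{h}_{ij}}(1 + m_{ij}|\dif{g}_{ij}|^2/H)$, and the factor in parentheses is exactly $-\beta_{ij}/H$ by definition.

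Armed with this identity, the three component matches become one-line verifications: for the second component $-\Re\dif{\varphi}_{ij} = \tfrac{\beta_{ij}}{H}\Re\overline{\dif{h}_{ij}} = \tfrac{\beta_{ij}}{H}\Re\dif{h}_{ij}$; for the third $\Im\dif{\varphi}_{ij} = -\tfrac{\beta_{ij}}{H}\Im\overline{\dif{h}_{ij}} = \tfrac{\beta_{ij}}{H}\Im\dif{h}_{ij}$; and for the first $-\Re(\bar\varphi_{ij}\dif{\varphi}_{ij}) = \tfrac{\beta_{ij}}{H}\Re(\bar\varphi_{ij}\overline{\dif{h}_{ij}}) = \tfrac{\beta_{ij}}{H}\Re(\varphi_{ij}\dif{h}_{ij})$.

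I do not expect a serious obstacle; the only thing to watch is bookkeeping of conjugations and the sign flip in the third coordinate of $\nu$ (the $-\Im\varphi$ entry), which is precisely what makes the conjugated form $\overline{\dif{h}_{ij}}$ on the right reconcile with $\dif{h}_{ij}$ on the $\dif{Y}$ side. All the remaining manipulations are routine real/imaginary part identities once the key relation $\dif{\varphi}_{ij} = -(\beta_{ij}/H)\overline{\dif{h}_{ij}}$ is in hand.
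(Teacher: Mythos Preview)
Your proposal is correct and is essentially the same argument as the paper's. The paper packages both $\dif{\nu}_{ij}$ and $\beta_{ij}\dif{Y}_{ij}$ in the form $\Re\bigl((\varphi_{ij},1,-\mathbbm{i})\,\cdot\,\bigr)$ and shows the scalar factor $\dif{\bar\varphi}_{ij} + \tfrac{\beta_{ij}}{H}\dif{h}_{ij}$ vanishes, which is exactly the conjugate of your key identity $\dif{\varphi}_{ij} = -\tfrac{\beta_{ij}}{H}\,\overline{\dif{h}_{ij}}$; your component-by-component check is just the same computation unpacked.
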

\begin{proof}
    Using a similar relation to \eqref{eqn:weird} for $\varphi$, we may write
        \begin{align}
            \dif{\nu}_{ij}
                &= \left(-\tfrac{1}{2}(|\varphi_i|^2 - |\varphi_j|^2), -\Re \dif{\varphi}_{ij}, \Im\dif{\varphi}_{ij}\right) \nonumber\\
                &= \left(-\Re(\varphi_{ij} \dif{\bar{\varphi}}_{ij}), -\Re \dif{\bar{\varphi}}_{ij}, \Re(\mathbbm{i}\dif{\bar{\varphi}}_{ij})\right) \nonumber\\
                &= \Re \left((-\varphi_{ij}, -1, \mathbbm{i})\dif{\bar{\varphi}}_{ij}\right) \label{eqn:dnu}.
        \end{align}
    Therefore, using the definition of $\dif{h}_{ij}$ \eqref{eqn:defh}
        \begin{align*}
            \dif{\nu}_{ij} - \beta_{ij} \dif{Y}_{ij}
                % &= \Re \left((-\varphi_{ij}, -1, \mathbbm{i})\dif{\bar{\varphi}}_{ij}\right) - \tfrac{\beta_{ij}}{H} \Re\left((\bar{h}_{ij} + g_{ij}, 1, -\mathbbm{i}) \dif{h}_{ij}\right) \\
                % &= -\Re \left((\varphi_{ij}, 1, -\mathbbm{i})\dif{\bar{\varphi}}_{ij} + \frac{\beta_{ij}}{H} (\varphi_{ij}, 1, -\mathbbm{i}) \dif{h}_{ij}\right) \\
                &= -\Re \left((\varphi_{ij}, 1, -\mathbbm{i})\left( \dif{h}_{ij} + \dif{\bar{g}_{ij}} + \tfrac{\beta_{ij}}{H} \dif{h}_{ij}\right)\right) \\
                &= -\Re \left((\varphi_{ij}, 1, -\mathbbm{i})\left( \tfrac{H + \beta_{ij}}{H}\tfrac{H}{m_{ij}\dif{g}_{ij}} + \dif{\bar{g}_{ij}}\right)\right) \\
                % &= -\Re \left((\varphi_{ij}, 1, -\mathbbm{i})\left( \frac{-m_{ij}|{\dif{g}_{ij}}|^2}{H}\frac{H}{m_{ij}\dif{g}_{ij}} + \dif{\bar{g}_{ij}}\right)\right) \\
                &= -\Re \left((\varphi_{ij}, 1, -\mathbbm{i}) (-{\dif{\bar{g}}_{ij}} + \dif{\bar{g}_{ij}})\right)\\
                &=0,
        \end{align*}
    giving us the desired conclusion.
\end{proof}

Lemma~\ref{lemma:Nprop} tells us that $N$ is a lightlike Gauss map of $Y$, allowing us to summarize as follows:
\begin{proposition}
    The discrete isothermic surface $Y: \Sigma \to \mathbb{I}^3$ given by
    \begin{equation}\label{eqn:cmcWrep1}
        \dif{Y}_{ij} = \frac{1}{H} \Re\left((\bar{h}_{ij} + g_{ij}, 1, -\mathbbm{i}) \dif{h}_{ij}\right)
    \end{equation}
    with
        \[
            \dif{h}_{ij} = \frac{H}{m_{ij}\dif{g}_{ij}}
        \]
    admits a lightlike Gauss map $N : \Sigma \to \mathcal{L} \cap P$ of the form
    \begin{equation}\label{eqn:lGaussMap}
        N = -\frac{1}{2}(1 + |\varphi|^2, 2 \Re \varphi, -2 \Im \varphi, -1 + |\varphi|^2)^t
    \end{equation}
    where
        \[
            \varphi := \bar{h} + g.
        \]
\end{proposition}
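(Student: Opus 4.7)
The proposition is essentially a summary of the construction carried out in Sections~\ref{sub:3one} and \ref{sub:3two}, so the plan is to assemble the three ingredients needed to certify that $N$ given by \eqref{eqn:lGaussMap} is a lightlike Gauss map of $Y$ in the sense of \eqref{eqn:dg}: namely, (i) that $N$ takes values in the affine lightcone slice $\mathcal{L} \cap P$, (ii) that $\dif{N}_{ij}$ is parallel to $\dif{Y}_{ij}$ along every edge, and (iii) that $\dif{N}_{ij}$ is generically nonzero.

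For (i), I would simply plug the explicit form of $N$ into the bilinear form on $\mathbb{R}^{3,1}$ under the normalization $\mathfrak{p} = (1,0,0,1)^t$ and $\tilde{\mathfrak{p}} = \tfrac{1}{2}(-1,0,0,1)^t$ fixed in Section~\ref{sub:one}. The computation
\[
    (N,N) = -\tfrac{1}{4}\bigl(-(1+|\varphi|^2)^2 + 4(\Re\varphi)^2 + 4(\Im\varphi)^2 + (-1+|\varphi|^2)^2\bigr) = 0
\]
and $(N,\mathfrak{p}) = -\tfrac{1}{2}(-(1+|\varphi|^2) + (-1+|\varphi|^2)) = 1$ are immediate; these are the two conditions identified right after the definition of $N$ and require no further work.

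For (ii), the hard part has already been carried out in Lemma~\ref{lemma:Nprop}, which provides the explicit edge-function
\[
    \beta_{ij} = -m_{ij}|\dif{g}_{ij}|^2 - H
\]
realizing $\dif{N}_{ij} = \beta_{ij}\dif{Y}_{ij}$. I would simply cite that lemma, noting that the shift by the constant $\tilde{\mathfrak{p}}$ means $\dif{N}_{ij} = \dif{\nu}_{ij}$, so the parallelism established there for $\nu$ transfers directly.

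For (iii), I would observe that $\beta_{ij}$ vanishes precisely when $m_{ij}|\dif{g}_{ij}|^2 = -H$, which is a non-generic condition on the data $(g,m)$. Away from this locus $\dif{N}_{ij} \neq 0$, so the defining condition \eqref{eqn:dg} for a lightlike Gauss map is satisfied. The main conceptual point to emphasize is that nothing genuinely new is being proved here: the proposition packages the explicit ansatz \eqref{eqn:lGaussMap} together with the computation of Lemma~\ref{lemma:Nprop}, confirming that this ansatz is indeed the Gauss map of the candidate discrete cmc surface $Y$ built in Section~\ref{sub:3one}. I do not anticipate a serious obstacle, only the bookkeeping of ensuring signs and the $\mathfrak{p}$, $\tilde{\mathfrak{p}}$ splitting agree with the normalizations fixed in Section~\ref{sub:one}.
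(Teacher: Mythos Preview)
Your proposal is correct and matches the paper's approach: the proposition is stated as a summary of the preceding construction, with the paper verifying $(N,N)=0$ and $(N,\mathfrak{p})=1$ directly and then invoking Lemma~\ref{lemma:Nprop} for the edge-parallelism, exactly as you outline. Your explicit discussion of the genericity condition $\beta_{ij}\neq 0$ in (iii) is a small addition beyond what the paper spells out, but it is in the same spirit.
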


\subsection{Weierstrass representation for discrete cmc surface}\label{sub:3three}
Finally, we will use the lightlike Gauss map $N$ in \eqref{eqn:lGaussMap} to show that $Y$ obtained via \eqref{eqn:cmcWrep1} has constant mean curvature $H$.
In particular, we will make use of the expression \eqref{eqn:meanCurv2}: thus let us define $x := \pi_2(X)$ and $n := \pi_2(\nu)$.
Viewing the discrete complex functions $g$, $h$, $\varphi$, and their conjugates as maps into $\mathbb{R}^2$, the expression for $x$ \eqref{eqn:expx} tells us that
    \[
        x = \frac{1}{H}(0,\Re h, \Im h) = \frac{1}{H}h
    \]
while the expression for $\nu$ \eqref{eqn:expnu} allows us to see that
    \[
        n = -(0, \Re \varphi, - \Im \varphi) = -\bar{\varphi} = -h - \bar{g}.
    \]
Using these, we can calculate that
    \[
        A(x,n)_{ijk\ell} + H A(x, x)_{ijk\ell} = A(x, n + Hx) = A\left(\tfrac{1}{H}h,- \bar{g}\right) = -\tfrac{1}{H}A(h,\bar{g}),
    \]
which vanishes by Remark~\ref{rema:dual}.
Hence, we have $H_{ijk\ell} \equiv H$, that is, $Y$ has constant mean curvature $H$, allowing us to conclude that \eqref{eqn:cmcWrep1} gives a Weierstrass representation for discrete isothermic cmc surfaces:
\begin{theorem}[Weierstrass representation for discrete cmc surfaces]\label{thm:wRep}
    Let $g: \Sigma \to \mathbb{C}$ be a discrete holomorphic function with cross-ratios factorizing function $m$, and let $h$ be a discrete holomorphic function defined on any edge $(ij)$ via
        \begin{equation}\label{eqn:ghcond}
            \dif{h}_{ij} = \frac{H}{m_{ij}\dif{g}_{ij}} =: H\omega_{ij}
        \end{equation}
    for some real non-zero constant $H$.
    Then the discrete $1$-form
        \begin{equation}\label{eqn:cmcWrep2}
            \dif{Y}_{ij} = \Re\left((\bar{h}_{ij} + g_{ij}, 1, -\mathbbm{i}) \,\omega_{ij}\right)
        \end{equation}
    on any edge $(ij)$ defines a discrete isothermic constant mean curvature $H$ surface $Y: \Sigma \to \mathbb{I}^3$ with lightlike Gauss map $N : \Sigma \to \mathcal{L} \cap P$ given by
    \begin{equation}\label{eqn:lGaussMap2}
        N = -\frac{1}{2}(1 + |\varphi|^2, 2 \Re \varphi, -2 \Im \varphi, -1 + |\varphi|^2)^t
    \end{equation}
    for
        \[
            \varphi = \bar{h} + g.
        \]
    In this case, we say that $(h,g)$ are the Weierstrass data of $Y$.
\end{theorem}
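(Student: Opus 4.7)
The plan is to assemble Theorem~\ref{thm:wRep} from the ingredients already developed in Sections~\ref{sub:3one}--\ref{sub:3three}: there are essentially four claims to verify, namely (i) that $\dif Y$ closes up so $Y$ is a well-defined discrete surface on $\Sigma$, (ii) that $Y$ is discrete isothermic, (iii) that $N$ as in \eqref{eqn:lGaussMap2} is a lightlike Gauss map of $Y$, and (iv) that the mean curvature computed from $N$ via mixed areas equals $H$ on every elementary quadrilateral.

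For (i) and (ii), I would first observe that since $g$ is discrete holomorphic with cross-ratios factorizing function $m$, the Christoffel-dual construction defines a discrete holomorphic $h$ on $\Sigma$ via \eqref{eqn:ghcond}, so $\omega_{ij} := \dif h_{ij}/H$ is automatically a closed $\mathbb{C}$-valued discrete $1$-form. I would then introduce $X$ as the discrete zero mean curvature surface produced from $g$ by Fact~\ref{fact:Wrepmin} and $S := x + \tfrac{H}{2}(x,x)\mathfrak{p}$ with $x := \pi_2(X)$; the identities \eqref{eqn:dXijdone} and \eqref{eqn:dSijdone} already computed in Section~\ref{sub:3one} show that $\dif X_{ij} + \dif S_{ij}$ matches the right-hand side of \eqref{eqn:cmcWrep2}, so $Y = X +_\mathrm{gr} S$ exists on all of $\Sigma$. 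Lemma~\ref{lemm:Scirc2} ensures $S$ is a circular net, and since both $X$ and $S$ are circular with common $\pi_2$-projection and $X$ is discrete isothermic, Proposition~\ref{prop:circAdd} yields that $Y$ is discrete isothermic.

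For (iii), a direct algebraic verification gives $(N,N) = 0$ and $(N,\mathfrak p) = 1$, placing $N$ in $\mathcal L \cap P$; Lemma~\ref{lemma:Nprop} then supplies the edge-parallel relation $\dif N_{ij} = \beta_{ij}\,\dif Y_{ij}$ with $\beta_{ij} = -m_{ij}|\dif g_{ij}|^2 - H$, which is exactly the condition \eqref{eqn:dg} singling out the lightlike Gauss map. For (iv), I would use the simplified mixed-area expression \eqref{eqn:meanCurv2}: with $x = \tfrac{1}{H} h$ and $n := \pi_2(\nu) = -\bar\varphi = -h - \bar g$, bilinearity of the mixed area on pairwise edge-parallel circular nets gives
\[
A(x,n)_{ijk\ell} + H\cdot A(x,x)_{ijk\ell} = A\!\left(\tfrac{1}{H} h,\; n + H x\right)_{ijk\ell} = -\tfrac{1}{H}\, A(h,\bar g)_{ijk\ell},
\]
which vanishes by Remark~\ref{rema:dual} since $h$ is Christoffel dual to $\bar g$ as an $\mathbb{R}^2$-valued net. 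Hence $H_{ijk\ell} \equiv H$ on every elementary quadrilateral.

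The main obstacle is conceptual rather than computational: one needs the graph-sum decomposition $Y = X +_\mathrm{gr} S$ to fit the discrete framework so that circularity of $S$ (Lemma~\ref{lemm:Scirc2}), closure of $\dif Y$, the edge-parallel condition for $N$ (Lemma~\ref{lemma:Nprop}), and the mixed-area vanishing (Remark~\ref{rema:dual}) all line up with the same holomorphic data $(g,h)$. The heavier technical steps --- the telescoping identity \eqref{eqn:weird} used in deriving $\dif S_{ij}$, the form of $\dif\nu_{ij}$ in \eqref{eqn:dnu}, and its cancellation with $\beta_{ij}\dif Y_{ij}$ --- have already been carried out in the preceding subsections, so the proof of the theorem itself is primarily an organizational assembly of those results.
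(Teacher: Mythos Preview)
Your proposal is correct and follows essentially the same approach as the paper: the theorem is obtained by assembling the results of Sections~\ref{sub:3one}--\ref{sub:3three}, namely the graph-sum decomposition $Y = X +_{\mathrm{gr}} S$ with Proposition~\ref{prop:circAdd} and Lemma~\ref{lemm:Scirc2} for isothermicity, Lemma~\ref{lemma:Nprop} for the lightlike Gauss map, and the mixed-area identity $A(x,n)+H\,A(x,x) = -\tfrac{1}{H}A(h,\bar g)=0$ via Remark~\ref{rema:dual} for the mean curvature.
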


As in the smooth case, discrete cmc surfaces in Euclidean space admit a parallel cmc surface \cite[p.\ 23]{hertrich-jeromin_DiscreteVersionDarboux_1999}.
We will now show the isotropic analogue of such result using the Weierstrass representation:

\begin{theorem}\label{thm:parcmc}
    Let $Y: \Sigma \to \mathbb{I}^3$ be a discrete isothermic cmc-$H$ surface with Gauss map $\nu$ given by the Weierstrass data $(h,g)$.
    Then $Y^P: \Sigma \to \mathbb{I}^3$ defined by
        \begin{equation}\label{eqn:defpar}
            Y^P := Y + \frac{1}{H} \nu
        \end{equation}
    is a parallel cmc-$(-H)$ surface with Weierstrass data
        \begin{equation}\label{eqn:wdataPar}
            (h^P, g^P) = (\bar{g}, \bar{h}).
        \end{equation}
\end{theorem}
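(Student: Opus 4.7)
The plan is to compute $\dif{Y^P}_{ij}$ directly from the definition \eqref{eqn:defpar} and to recognize the result as the Weierstrass representation \eqref{eqn:cmcWrep2} associated to the data $(\bar{g},\bar{h})$ with mean curvature constant $-H$; once this identification is made, Theorem~\ref{thm:wRep} applied with $-H$ in place of $H$ gives the claim.

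First I would take the discrete differential of \eqref{eqn:defpar} to obtain
\[
    \dif{Y^P}_{ij} = \dif{Y}_{ij} + \tfrac{1}{H}\,\dif{\nu}_{ij}.
\]
The assertion that $Y^P$ is \emph{parallel} to $Y$ is then immediate from Lemma~\ref{lemma:Nprop}: since $\dif{\nu}_{ij} = \beta_{ij}\,\dif{Y}_{ij}$, the nets $Y$ and $Y^P$ are edge-parallel, which is the discrete analogue of classical parallel surfaces.

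Next, to identify the Weierstrass data of $Y^P$, I would substitute \eqref{eqn:cmcWrep2} for $\dif{Y}_{ij}$ and \eqref{eqn:dnu} for $\dif{\nu}_{ij}$, together with the identity $\dif{\bar{\varphi}}_{ij} = \dif{h}_{ij} + \dif{\bar{g}}_{ij} = H\omega_{ij} + \dif{\bar{g}}_{ij}$. The $\omega_{ij}$ contributions cancel, leaving
\[
    \dif{Y^P}_{ij} = -\tfrac{1}{H}\,\Re\big((\bar{h}_{ij} + g_{ij},\,1,\,-\ii)\,\dif{\bar{g}}_{ij}\big).
\]
Setting $(h^P,g^P) := (\bar{g},\bar{h})$, the prefactor $\bar{h}_{ij} + g_{ij}$ is exactly $\overline{h^P}_{ij} + g^P_{ij}$. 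Conjugating the defining relation $\dif{g}_{ij}\,\dif{h}_{ij} = H/m_{ij}$ yields $\dif{\bar{g}}_{ij}\,\dif{\bar{h}}_{ij} = H/m_{ij}$, so taking the factorizing function $m^P := -m$ for $g^P = \bar{h}$ gives
\[
    \omega^P_{ij} := \frac{1}{m^P_{ij}\,\dif{g^P}_{ij}} = -\frac{1}{m_{ij}\,\dif{\bar{h}}_{ij}} = -\frac{\dif{\bar{g}}_{ij}}{H}.
\]
Hence $\dif{Y^P}_{ij} = \Re\big((\overline{h^P}_{ij} + g^P_{ij},\,1,\,-\ii)\,\omega^P_{ij}\big)$, and the compatibility $\dif{h^P}_{ij} = \dif{\bar{g}}_{ij} = -H\,\omega^P_{ij}$ holds. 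Theorem~\ref{thm:wRep} applied with constant $-H$ in place of $H$ then concludes that $Y^P$ is a discrete isothermic cmc-$(-H)$ surface with Weierstrass data $(\bar{g},\bar{h})$.

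The main bookkeeping subtlety I anticipate is the sign choice $m^P = -m$: one must verify that $-m$ is a legitimate cross-ratios factorizing function for $\bar{h}$, which follows because cross-ratios are real (so $\bar{h}$ inherits those of $h$) and such factorizing functions are defined only up to a real multiplicative constant. Beyond this sign, everything reduces to the cancellation displayed above and a direct appeal to Theorem~\ref{thm:wRep}.
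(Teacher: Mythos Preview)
Your proposal is correct and follows essentially the same route as the paper's own proof: compute $\dif{Y^P}_{ij}=\dif{Y}_{ij}+\tfrac{1}{H}\dif{\nu}_{ij}$, substitute \eqref{eqn:cmcWrep1} and \eqref{eqn:dnu}, watch the $\dif{h}_{ij}$-contributions cancel to leave $-\tfrac{1}{H}\Re\big((\bar h_{ij}+g_{ij},1,-\ii)\,\dif{\bar g}_{ij}\big)$, and identify this with the Weierstrass form for data $(h^P,g^P)=(\bar g,\bar h)$ at constant $-H$. Your additional bookkeeping about the factorizing function $m^P=-m$ is a detail the paper leaves implicit; note, though, that your stated justification (``$\bar h$ inherits the cross-ratios of $h$'' plus freedom up to a constant) only closes once you observe that the Christoffel dual $h$ has the \emph{same} cross-ratios as $g$, so that $m$---and hence $-m$---really is a factorizing function for $\bar h$.
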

\begin{proof}
    From the definition of $Y^P$ \eqref{eqn:defpar}, it is easy to see that $Y^P$ is edge-parallel to $Y$.
    Let us define discrete holomorphic functions $g^P$ and $h^P$ via \eqref{eqn:wdataPar}.
    Now we calculate using \eqref{eqn:cmcWrep1} and \eqref{eqn:dnu} that
        \begin{align*}
            \dif{Y}^P_{ij} &= \dif{Y}_{ij} + \frac{1}{H}\dif{\nu}_{ij} \\
                &= \frac{1}{H} \Re\left((\varphi_{ij}, 1, -\mathbbm{i}) \dif{h}_{ij}\right) - \frac{1}{H} \Re \left((\varphi_{ij}, 1, -\mathbbm{i})\dif{\bar{\varphi}}_{ij}\right) \\
                &= \frac{1}{H} \Re\left((\varphi_{ij}, 1, -\mathbbm{i}) (\dif{h}_{ij} - \dif{\bar{\varphi}}_{ij})\right)\\
                &= -\frac{1}{H} \Re\left((g_{ij} + \bar{h}_{ij}, 1, -\mathbbm{i}) \dif{\bar{g}}_{ij})\right)\\
                &= -\frac{1}{H} \Re\left((\bar{h}^P_{ij} + g^P_{ij}, 1, -\mathbbm{i}) \dif{h}^P_{ij})\right).
        \end{align*}
    Therefore, the Weierstrass representation for discrete cmc surfaces allows us to conclude that $Y^P$ is a cmc-$(-H)$ surface with Weierstrass data $(\bar{g}, \bar{h})$.
\end{proof}

\subsection{Examples}\label{sub:3four}
In this section, we put the Weierstrass representation (Theorem~\ref{thm:wRep}) to test, and construct explicit examples of discrete cmc surfaces in isotropic space, obtaining closed-form parametrizations.

\textbf{Doubly channel cmc-$H$ surfaces:} Let $g$ be the discrete holomorphic function that is a scaling of the identity function (with cross-ratios $-1$), given by
    \[
        g(m,n) = H \left(\frac{m}{M} + \mathbbm{i} \frac{n}{M}\right)
    \]
for some $H \in \mathbb{R}^\times$ and $M \in \mathbb{N}$.
Since the cross-ratios factorizing function of $g$ is only determined up to a multiplication by a real constant, let us choose
    \[
        m_{ij} = -m_{i\ell} = \frac{M N}{H}
    \]
for some $N \in \mathbb{N}$, and define $h$ by
    \[
        h(m,n) = H \left(\frac{m}{N} + \mathbbm{i} \frac{n}{N}\right),
    \]
Then we can check that so defined $g$ and $h$ satisfies \eqref{eqn:ghcond}, that is,
    \[
        \dif{h}_{ij} = \frac{H}{m_{ij} \dif{g}_{ij}}.
    \]

Thus, we can use Weierstrass representation so that \eqref{eqn:cmcWrep2} implies
    \begin{align*}
        \dif{Y}_{ij} &= \frac{1}{H}\Re\left((\bar{h}_{ij} + g_{ij}, 1, -\mathbbm{i}) \dif{h}_{ij}\right) = -\left(\frac{H(M+N)(2m + 1)}{2 M N^2}, \frac{1}{N},0 \right) \\
        \dif{Y}_{i\ell} &= \frac{1}{H}\Re\left((\bar{h}_{i\ell} + g_{i\ell}, 1, -\mathbbm{i}) \dif{h}_{i\ell}\right) = -\left(\frac{H(M-N)(2n + 1)}{2 M N^2}, 0, \frac{1}{N}\right).
    \end{align*}
From these recurrence relations, we can find directly that
    \[
        Y(m,n) = \left(\frac{H}{2M}\left((M+N)\Big(\frac{m}{N}\Big)^2 + (M-N)\Big(\frac{n}{N}\Big)^2\right), \frac{m}{N}, \frac{n}{N} \right).
    \]
The explicit discrete parametrization tells us that every $m$-curvature line and $n$-curvature line lies on a Euclidean parabola with vertical axis, that is, every curvature line is a discrete parabolic circle.
Therefore, the surface is a doubly channel cmc-$H$ surface (see Figure~\ref{fig:channel}).

\begin{figure}
    \centering
    \begin{minipage}{0.32\linewidth}
        \centering
        \includegraphics[width=\linewidth]{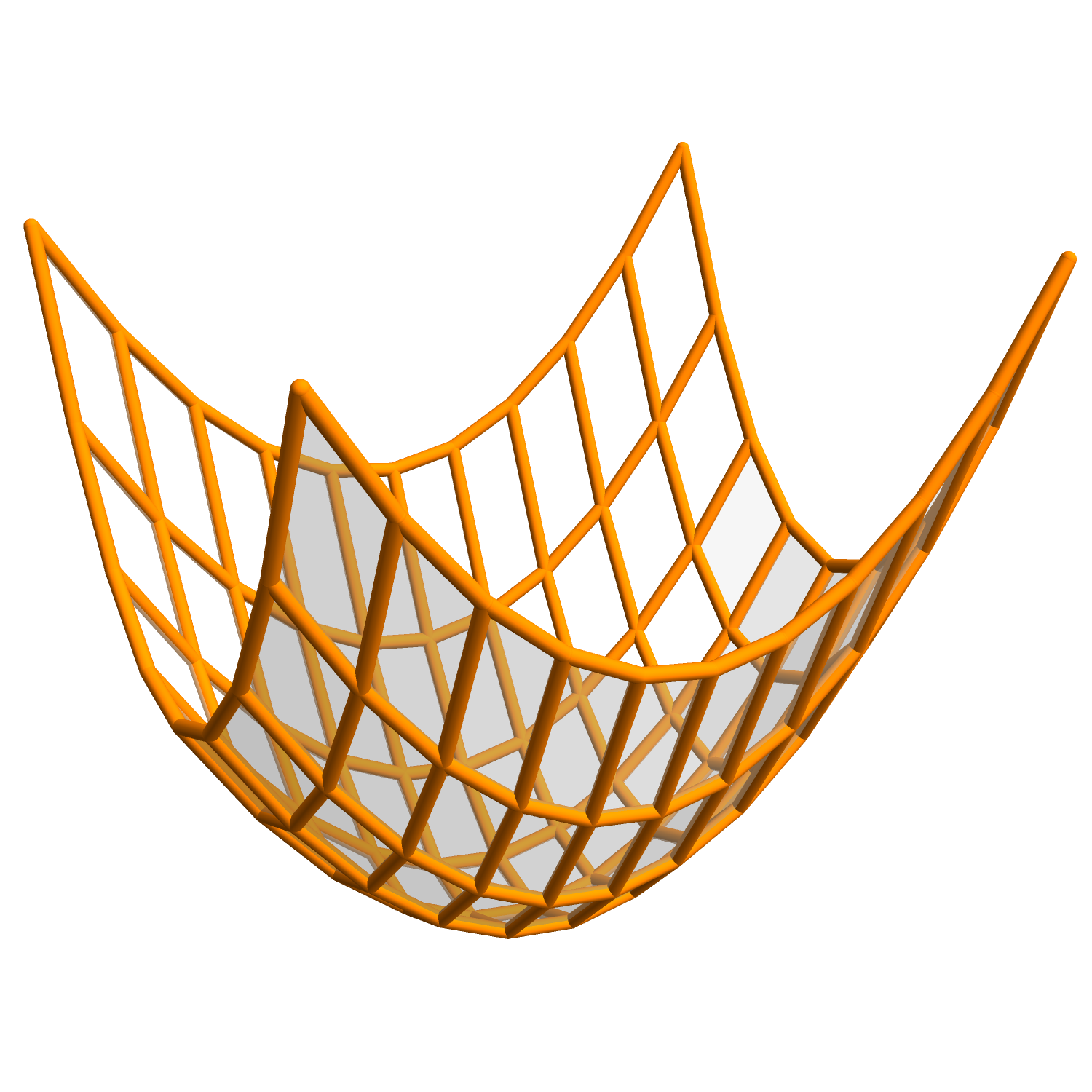}
    \end{minipage}
    \begin{minipage}{0.32\linewidth}
        \centering
        \includegraphics[width=\linewidth]{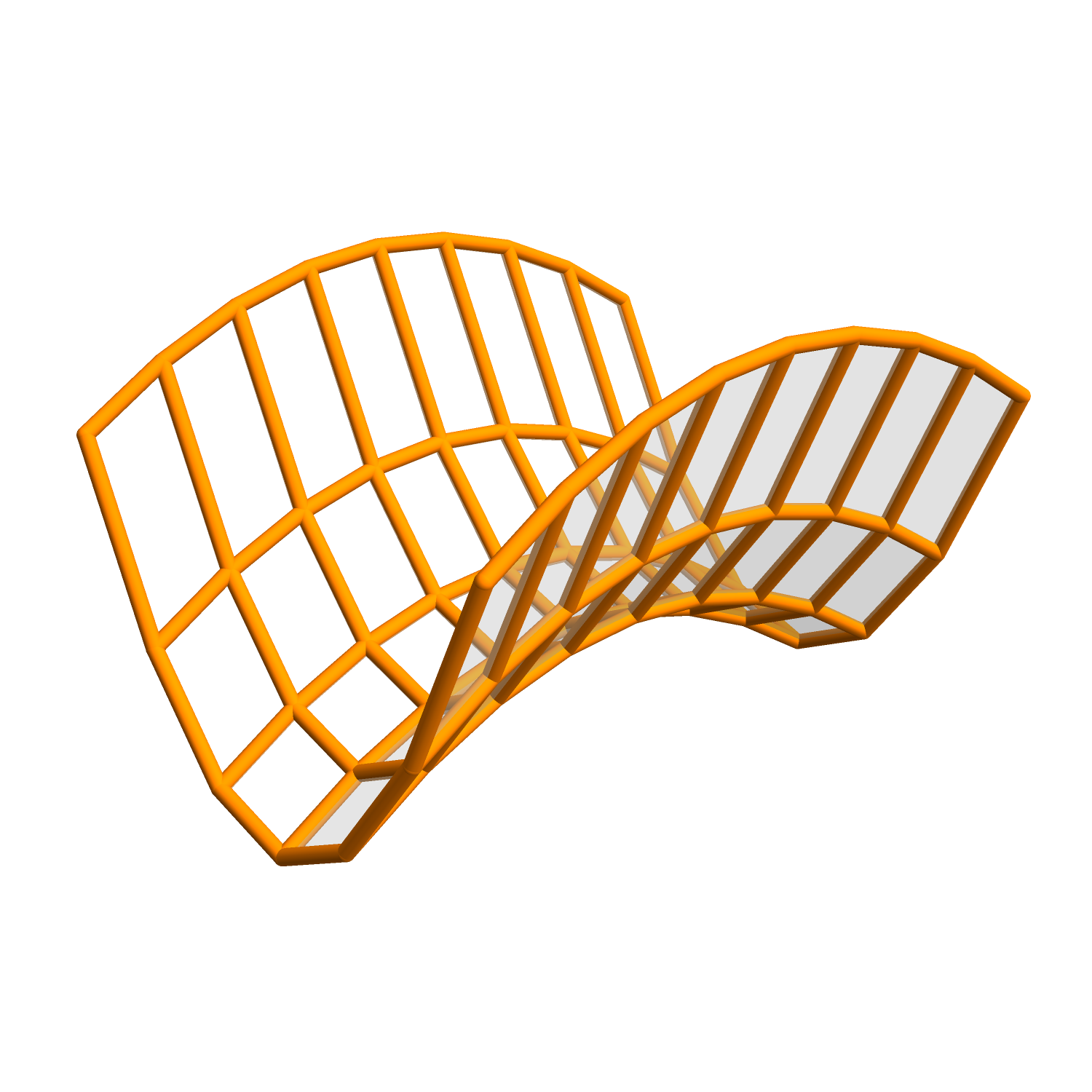}
    \end{minipage}
    \hfill
    \begin{minipage}{0.32\linewidth}
        \centering
        \includegraphics[width=\linewidth]{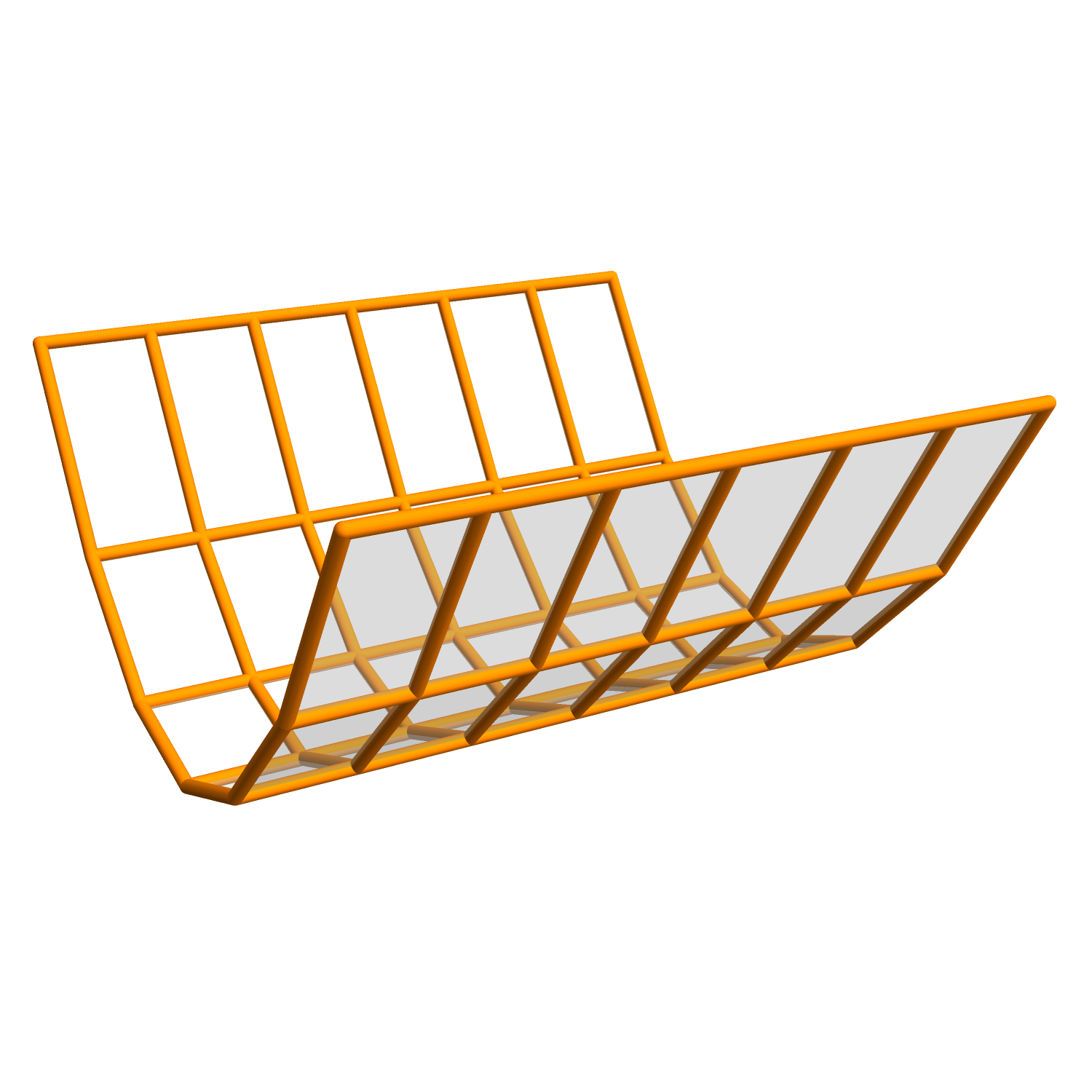}
    \end{minipage}
    
    \vspace{-25pt}
    
    \begin{minipage}{0.64\linewidth}
        \centering
        \subcaptionbox{Doubly channel cmc surfaces \label{fig:channel}}{\phantom{(A) Doubly channel cmc surface}}
    \end{minipage}
    \hfill
    \begin{minipage}{0.32\linewidth}
        \centering
        \subcaptionbox{Discrete cmc cylinder \label{fig:cylinder}}{\phantom{(B) Discrete cmc cylinder}}
    \end{minipage}
    \caption{Examples of discrete cmc-$1$ surfaces with circular curvature lines. The left-hand figure is created with $(M,N) = (6,2)$, the middle figure with $(M,N) = (-2, 3)$, and the right-hand figure with $(M, N) = (4,4)$.}
    \label{fig:channelFam}
\end{figure}

\textbf{Discrete cmc-$H$ cylinders:} From the doubly channel example, let us take $M=N$.
Then the parametrization of the discrete cmc-$H$ surface reads
    \[
        Y(m,n) = \left(H\Big(\frac{m}{M}\Big)^2, \frac{m}{M}, \frac{n}{M} \right).
    \]
Thus, this is the special case among double channel examples where the $n$-curvature lines are straight lines, giving us discrete cmc-$H$ cylinders (see Figure~\ref{fig:cylinder}).

\textbf{Discrete Delaunay-type surfaces:} Let $g^c$ be the family of discrete exponential functions (see, for example, \cite[\S~2.2]{hoffmann_DiscreteFlatSurfaces_2012} or \cite[\S~3.2]{muller_PlanarDiscreteIsothermic_2016}) with cross-ratios $-1$, so that
    \[
        g^c(m,n) = c \cdot \exp{\left(-\cosh^{-1}(2 - \cos \tfrac{\pi}{N})m + \tfrac{\mathbbm{i}\pi}{N}n\right)} =: c \cdot \exp(-\alpha m + \mathbbm{i}\beta n)
    \]
for some $N \in \mathbb{N}$ and $c \in \mathbb{R}^\times$.
Note that $g^c$ is $2N$-periodic in the $n$-direction, that is,
    \[
        g^c(m,n) = g^c(m,n+2N).
    \]
As the cross-ratios factorizing function of $g$ is only determined up to a multiplication by a real constant, let us choose
    \[
        m_{ij} = - m_{i\ell} = -\frac{1}{4c}\csc^2 \left(\frac{\pi}{2N}\right).
    \]

Under this setting, if we define $h$ via
    \[
        h(m,n) = \frac{H}{c} \, g^c(-m,-n),
    \]
then $h$ is independent of the constant $c$; furthermore, one can directly verify that $h$ satisfies \eqref{eqn:ghcond}
    \[
        \dif{h}_{ij} = \frac{H}{m_{ij} \dif{g}^c_{ij}}
    \]
on any edge $(ij)$.

Thus, we may use Weierstrass representation and evaluate \eqref{eqn:cmcWrep2} to find that
    \begin{align*}
        \dif{Y}^c_{ij} &= \begin{multlined}[t]
            \Big( -\sinh \alpha (H e^{\alpha(2m + 1)} + c),(e^{\alpha m} - e^{\alpha (m+1)}) \cos{\beta n},\\
                -(e^{\alpha m} - e^{\alpha (m+1)}) \sin{\beta n} \Big)
        \end{multlined} \\
        \dif{Y}^c_{i\ell} &= \left(0, e^{\alpha m} \left(\cos{\beta n} - \cos{\beta(n+1)}\right), -e^{\alpha m} \left(\sin{\beta n} - \sin{\beta(n+1)}\right)\right).
    \end{align*}
This allows us to recover the explicit parametrization of the surface $Y^c$ as
    \[
        Y^c(m,n)=\left( \tfrac{H}{2} e^{2 \alpha m} + c \,(\sinh{\alpha}) \, m,e^{\alpha m} \cos \tfrac{\pi n}{N}, -e^{\alpha m} \sin \tfrac{\pi n}{N} \right).
    \]
The parametrization tells us that $Y^c$ gives a $1$-parameter family of discrete surfaces of revolution with period $2N$.
These are the discrete Delaunay-type surfaces in isotropic space (see Figure~\ref{fig:delaunayFam}).

\begin{figure}
    \centering
    \begin{minipage}{0.3\linewidth}
        \centering
        \includegraphics[width=\linewidth]{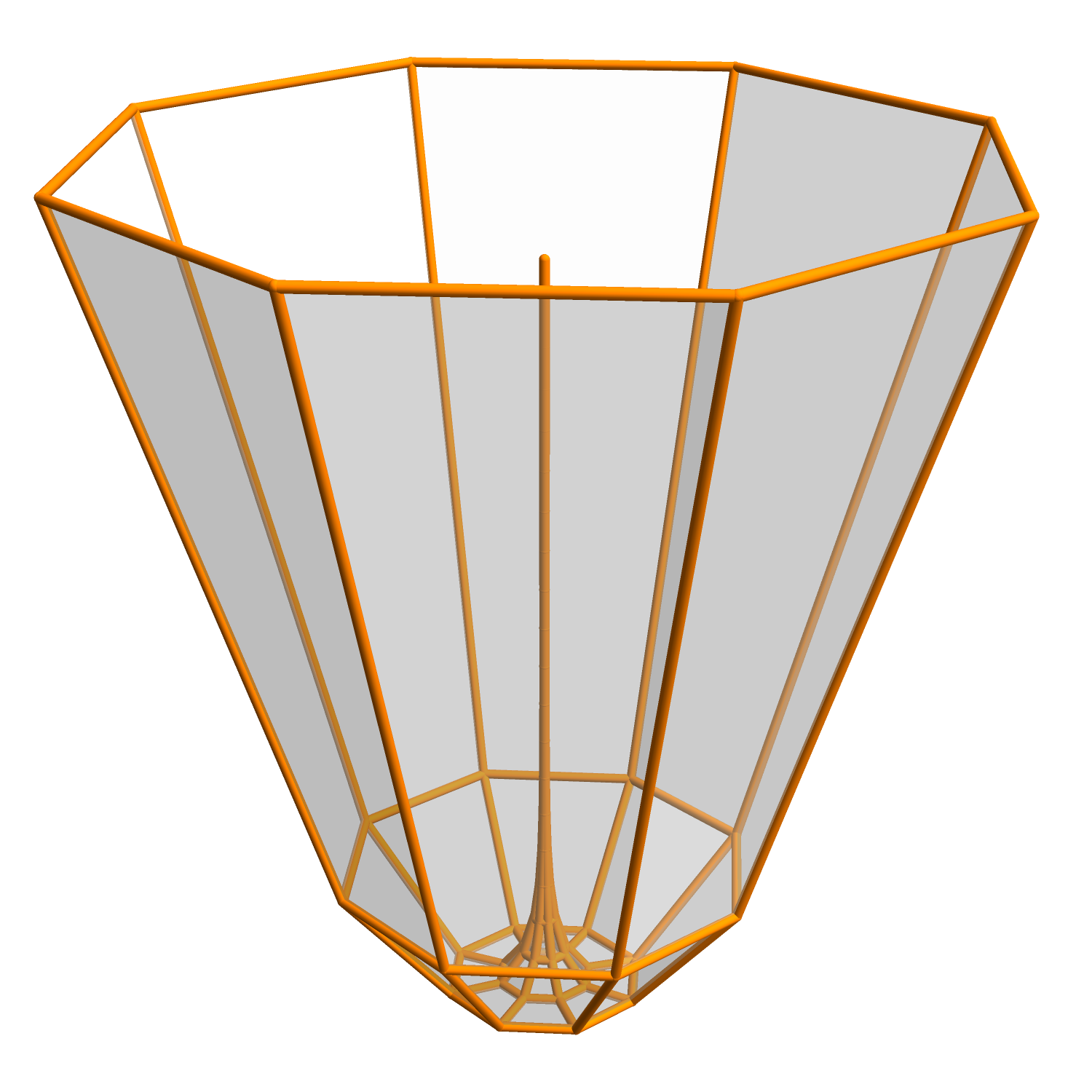}
    \end{minipage}
    \hfill
    \begin{minipage}{0.34\linewidth}
        \centering
        \includegraphics[trim={1cm 0 1cm 0}, clip, width=\linewidth]{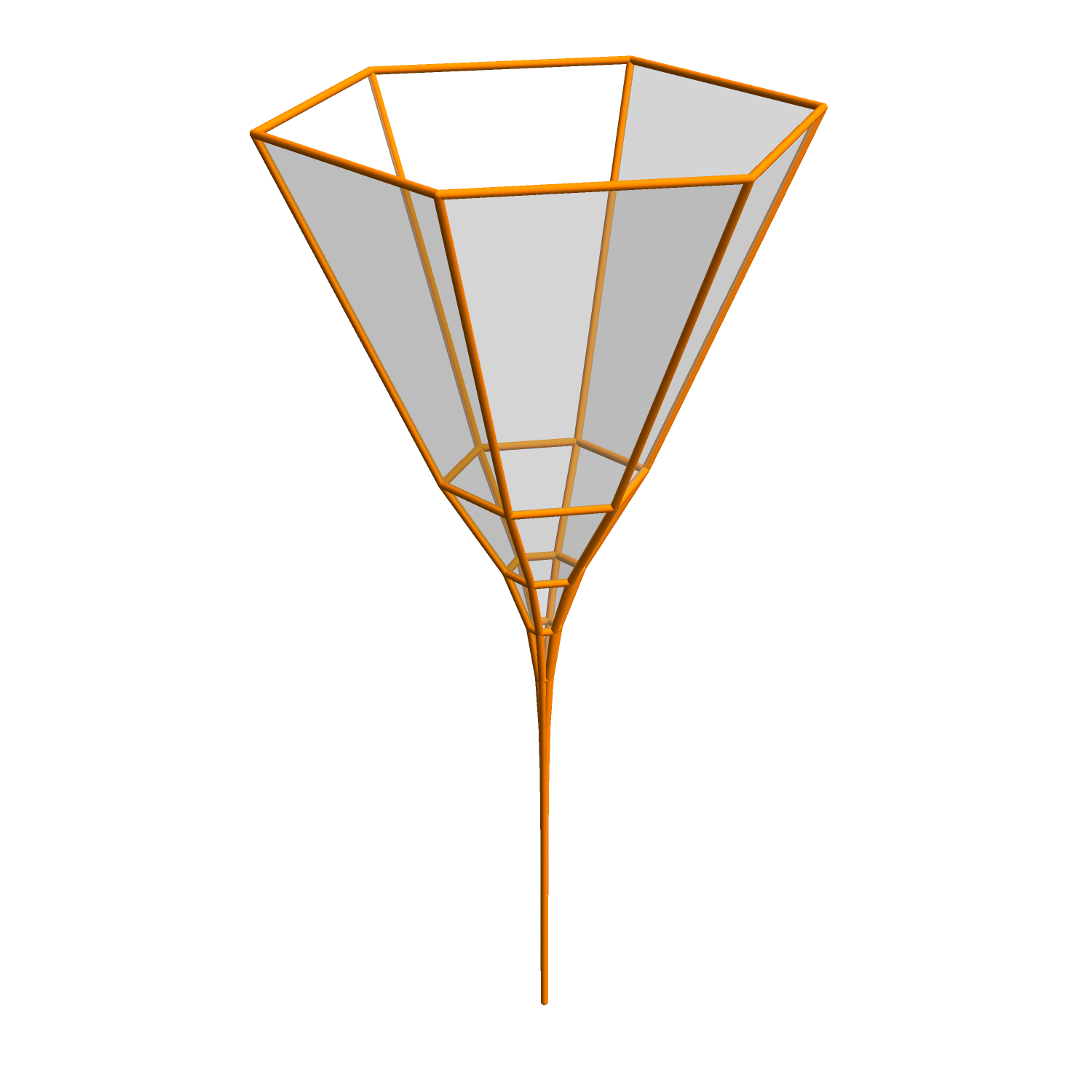}
    \end{minipage}
    \begin{minipage}{0.34\linewidth}
        \centering
        \includegraphics[trim={1cm 0 1cm 0}, clip, width=\linewidth]{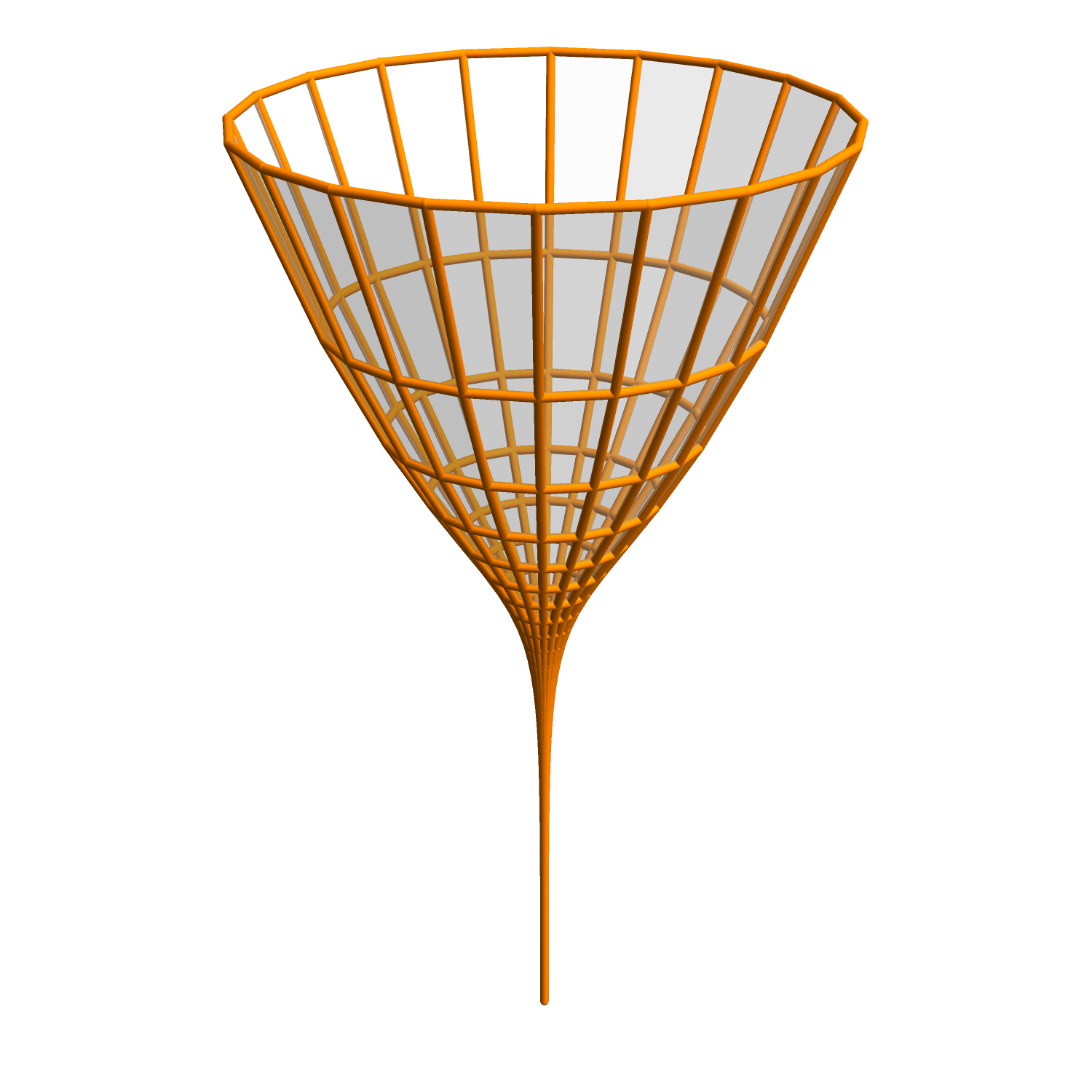}
    \end{minipage}
    \caption{Discrete Delaunay-type surfaces with $H \equiv 1$. The left-hand figure is created with $(N,c) = (4, -\tfrac{1}{2})$, the middle figure with $(N,c) = (3, \tfrac{1}{2})$, and the right-hand figure with $(N,c) = (10, \tfrac{1}{2})$.}
    \label{fig:delaunayFam}
\end{figure}
%
%\textbf{Data Availability} Data sharing not applicable to this article as no datasets
%were generated or analysed during the current study.
%
%\textbf{Conflict of Interest} The authors have no relevant financial or non-financial interests to disclose.

% \bib, bibdiv, biblist are defined by the amsrefs package.
\begin{bibdiv}
\begin{biblist}

\bib{agafonov_AsymptoticBehaviorDiscrete_2005}{article}{
	author={Agafonov, Sergey~I.},
	title={Asymptotic behavior of discrete holomorphic maps $z^c$ and $\log(z)$},
	date={2005},
	journal={J. Nonlinear Math. Phys.},
	volume={12},
	number={suppl. 2},
	pages={1–14},
	review={\MR{2217091}},
	doi={10.2991/jnmp.2005.12.s2.1},
}

\bib{bobenko_DiscreteIsothermicSurfaces_1996}{article}{
      author={Bobenko, Alexander~I.},
      author={Pinkall, Ulrich},
       title={Discrete isothermic surfaces},
        date={1996},
     journal={J. Reine Angew. Math.},
      volume={475},
       pages={187\ndash 208},
      review={\MR{1396732}},
      doi={10.1515/crll.1996.475.187},
}

\bib{bobenko_DiscreteDifferentialGeometry_2008a}{book}{
      author={Bobenko, Alexander~I.},
      author={Suris, Yuri~B.},
       title={Discrete differential geometry},
      series={Graduate {{Studies}} in {{Mathematics}}},
   publisher={Amer. Math. Soc.},
     address={Providence, RI},
        date={2008},
      number={98},
        ISBN={978-0-8218-4700-8},
      review={\MR{2467378}},
      doi={10.1090/gsm/098}
}

\bib{brander_HolomorphicRepresentationConstant_2010}{article}{
      author={Brander, David},
      author={Rossman, Wayne},
      author={Schmitt, Nicholas},
       title={Holomorphic representation of constant mean curvature surfaces in
  {{Minkowski}} space: consequences of non-compactness in loop group methods},
        date={2010},
     journal={Adv. Math.},
      volume={223},
      number={3},
       pages={949\ndash 986},
      review={\MR{2565553}},
      doi={10.1016/j.aim.2009.09.006}
}

\bib{bryant_SurfacesMeanCurvature_1987}{article}{
      author={Bryant, Robert~L.},
       title={Surfaces of mean curvature one in hyperbolic space},
        date={1987},
     journal={Ast{\'e}risque},
      volume={154--155},
       pages={321\ndash 347},
      review={\MR{955072}},
}

\bib{burstall_Discrete$Omega$netsGuichard_2023}{article}{
      author={Burstall, Francis~E.},
      author={Cho, Joseph},
      author={Hertrich-Jeromin, Udo},
      author={Pember, Mason},
      author={Rossman, Wayne},
       title={Discrete $\Omega$-nets and Guichard nets via discrete Koenigs nets},
        date={2023},
     journal={Proc. Lond. Math. Soc. (3)},
      volume={126},
      number={2},
       pages={790–836},
      review={\MR{4550152}},
      doi={10.1112/plms.12499}
}

\bib{burstall_DiscreteSurfacesConstant_2014}{incollection}{
      author={Burstall, Francis~E.},
      author={{Hertrich-Jeromin}, Udo},
      author={Rossman, Wayne},
      author={Santos, Susana~D.},
       title={Discrete surfaces of constant mean curvature},
        date={2014},
   book={
   	title={Development in differential geometry of submanifolds},
      editor={Kobayashi, Shim-Pei},
      series={{{RIMS K{\^o}ky{\^u}roku}}},
      volume={1880},
   publisher={Res. Inst. Math. Sci. (RIMS)},
     address={Kyoto},},
       pages={133\ndash 179},
       eprint={0804.2707},
       url={https://arxiv.org/abs/0804.2707}
}

\bib{cho_SpinorRepresentationIsotropic_2024}{article}{
      author={Cho, Joseph},
      author={Lee, Dami},
      author={Lee, Wonjoo},
      author={Yang, Seong-Deog},
       title={Spinor representation in isotropic 3-space via {{Laguerre}}
  geometry},
        date={2024},
     journal={Results Math.},
      volume={79},
      number={1},
       pages={8:1\ndash 33},
      review={\MR{4660756}},
      doi={10.1007/s00025-023-02031-0},
}

\bib{cho_DiscreteIsothermicSurfaces_}{unpublished}{
      author={Cho, Joseph},
      author={Naokawa, Kosuke},
      author={Ogata, Yuta},
      author={Pember, Mason},
      author={Rossman, Wayne},
      author={Yasumoto, Masashi},
       title={Discrete isothermic surfaces in {{Lie}} sphere geometry},
        note={To appear in Lecture Notes in Math.},
}

\bib{dasilva_HolomorphicRepresentationMinimal_2021}{article}{
      author={{da Silva}, Luiz C.~B.},
       title={Holomorphic representation of minimal surfaces in simply
  isotropic space},
        date={2021},
     journal={J. Geom.},
      volume={112},
      number={3},
       pages={35:1\ndash 21},
      review={\MR{4318435}},
      doi={10.1007/s00022-021-00598-z},
}

\bib{dorfmeister_WeierstrassTypeRepresentation_1998}{article}{
      author={Dorfmeister, Josef},
      author={Pedit, Franz},
      author={Wu, Hongyou},
       title={Weierstrass type representation of harmonic maps into symmetric
  spaces},
        date={1998},
     journal={Comm. Anal. Geom.},
      volume={6},
      number={4},
       pages={633\ndash 668},
      review={\MR{1664887}},
      doi={10.4310/CAG.1998.v6.n4.a1}
}

\bib{hertrich-jeromin_TransformationsDiscreteIsothermic_2000}{article}{
      author={{Hertrich-Jeromin}, Udo},
       title={Transformations of discrete isothermic nets and discrete cmc-1
  surfaces in hyperbolic space},
        date={2000},
     journal={Manuscripta Math.},
      volume={102},
      number={4},
       pages={465\ndash 486},
      review={\MR{1785326}},
      doi={10.1007/s002290070037}
}

\bib{hertrich-jeromin_IntroductionMobiusDifferential_2003}{book}{
      author={{Hertrich-Jeromin}, Udo},
       title={Introduction to {{M{\"o}bius}} differential geometry},
      series={London {{Mathematical Society Lecture Note Series}}},
   publisher={Cambridge University Press},
     address={Cambridge},
        date={2003},
      volume={300},
      review={\MR{2004958}},
      doi={10.1017/CBO9780511546693}
}

\bib{hertrich-jeromin_DiscreteVersionDarboux_1999}{incollection}{
      author={{Hertrich-Jeromin}, Udo},
      author={Hoffmann, Tim},
      author={Pinkall, Ulrich},
       title={A discrete version of the {{Darboux}} transform for isothermic
  surfaces},
        date={1999},
   book={
   	title={Discrete integrable geometry and physics ({{Vienna}}, 1996)},
      editor={Bobenko, Alexander~I.},
      editor={Seiler, Ruedi},
      series={Oxford {{Lecture Ser}}. {{Math}}. {{Appl}}.},
      volume={16},
   publisher={Oxford Univ. Press},
     address={New York},},
       pages={59\ndash 81},
      review={\MR{1676683}},
}

\bib{hertrich-jeromin_MobiusGeometrySurfaces_2001}{article}{
      author={{Hertrich-Jeromin}, Udo},
      author={Musso, Emilio},
      author={Nicolodi, Lorenzo},
       title={M{\"o}bius geometry of surfaces of constant mean curvature 1 in
  hyperbolic space},
        date={2001},
     journal={Ann. Global Anal. Geom.},
      volume={19},
      number={2},
       pages={185\ndash 205},
      review={\MR{1826401}},
      doi={10.1023/A:1010738712475}
}

\bib{hoffmann_DiscreteFlatSurfaces_2012}{article}{
      author={Hoffmann, Tim},
      author={Rossman, Wayne},
      author={Sasaki, Takeshi},
      author={Yoshida, Masaaki},
       title={Discrete flat surfaces and linear {{Weingarten}} surfaces in
  hyperbolic 3-space},
        date={2012},
     journal={Trans. Amer. Math. Soc.},
      volume={364},
      number={11},
       pages={5605\ndash 5644},
      review={\MR{2946924}},
      doi={10.1090/S0002-9947-2012-05698-4}
}

\bib{kobayashi_MaximalSurfaces$3$dimensional_1983}{article}{
      author={Kobayashi, Osamu},
       title={Maximal surfaces in the $3$-dimensional Minkowski space $\mathbf
  {L}^3$},
        date={1983},
     journal={Tokyo J. Math.},
      volume={6},
      number={2},
       pages={297–309},
      review={\MR{732085}},
      doi={10.3836/tjm/1270213872}
}

\bib{konderak_WeierstrassRepresentationTheorem_2005}{article}{
      author={Konderak, Jerzy~J.},
       title={A {{Weierstrass}} representation theorem for {{Lorentz}}
  surfaces},
        date={2005},
     journal={Complex Var. Theory Appl.},
      volume={50},
      number={5},
       pages={319\ndash 332},
      review={\MR{2141751}},
      doi={10.1080/02781070500032895}
}

\bib{kusner_SpinorRepresentationMinimal_1995}{article}{
      author={Kusner, Rob},
      author={Schmitt, Nick},
       title={The spinor representation of minimal surfaces},
   eprint={dg-ga/9512003},
        date={1995},
      url={https://arxiv.org/abs/dg-ga/9512003},
}

\bib{muller_PlanarDiscreteIsothermic_2016}{article}{
      author={M{\"u}ller, Christian},
       title={Planar discrete isothermic nets of conical type},
        date={2016},
     journal={Beitr. Algebra Geom.},
      volume={57},
      number={2},
       pages={459\ndash 482},
      review={\MR{3494000}},
      doi={10.1007/s13366-015-0256-4}
}

\bib{nutbourne_DifferentialGeometryApplied_1988}{book}{
      author={Nutbourne, Anthony~W.},
      author={Martin, Ralph~R.},
       title={Differential geometry applied to curve and surface design.
  {{Vol}}. 1},
   publisher={Ellis Horwood Ltd.},
     address={Chichester},
        date={1988},
        ISBN={978-0-7458-0140-7},
      review={\MR{940472}},
}

\bib{pember_WeierstrasstypeRepresentations_2020}{article}{
      author={Pember, Mason},
       title={Weierstrass-type representations},
        date={2020},
     journal={Geom. Dedicata},
      volume={204},
      number={1},
       pages={299\ndash 309},
      review={\MR{4056704}},
      doi={10.1007/s10711-019-00456-y}
}

\bib{pember_DiscreteWeierstrasstypeRepresentations_2023}{article}{
      author={Pember, Mason},
      author={Polly, Denis},
      author={Yasumoto, Masashi},
       title={Discrete {{Weierstrass-type}} representations},
        date={2023},
     journal={Discrete Comput. Geom.},
      volume={70},
      number={3},
       pages={816\ndash 844},
      doi={10.1007/s00454-022-00439-z},
      review={\MR{4650025}},
}

\bib{pottmann_DiscreteSurfacesIsotropic_2007}{incollection}{
      author={Pottmann, Helmut},
      author={Liu, Yang},
       title={Discrete surfaces in isotropic geometry},
        date={2007},
   book={
   	title={Mathematics of {{Surfaces XII}}},
      editor={Martin, Ralph},
      editor={Sabin, Malcolm},
      editor={Winkler, Joab},
      series={Lecture {{Notes}} in {{Computer Science}}},
   publisher={Springer},
     address={Berlin, Heidelberg},},
       pages={341\ndash 363},
       doi={10.1007/978-3-540-73843-5_21}
}

\bib{pottmann_GeometryMultilayerFreeform_2007}{article}{
      author={Pottmann, Helmut},
      author={Liu, Yang},
      author={Wallner, Johannes},
      author={Bobenko, Alexander~I.},
      author={Wang, Wenping},
       title={Geometry of multi-layer freeform structures for architecture},
        date={2007},
     journal={ACM Trans. on Graph. (TOG)},
      volume={26},
      number={3},
       pages={65\ndash 1\ndash 65\ndash 11},
       doi={10.1145/1276377.1276458}
}

\bib{sato_$d$minimalSurfacesThreedimensional_2021}{article}{
      author={Sato, Yuichiro},
       title={$d$-minimal surfaces in three-dimensional singular semi-Euclidean
  space $\mathbb{R}^{0,2,1}$},
        date={2021},
     journal={Tamkang J. Math.},
      volume={52},
      number={1},
       pages={37–67},
      review={\MR{4209565}},
      doi={10.5556/j.tkjm.52.2021.3045}
}

\bib{seo_ZeroMeanCurvature_2021}{article}{
      author={Seo, Jin~Ju},
      author={Yang, Seong-Deog},
       title={Zero mean curvature surfaces in isotropic three-space},
        date={2021},
     journal={Bull. Korean Math. Soc.},
      volume={58},
      number={1},
       pages={1\ndash 20},
      review={\MR{4206079}},
      doi={10.4134/BKMS.b190783}
}

\bib{strubecker_DifferentialgeometrieIsotropenRaumes_1942a}{article}{
      author={Strubecker, Karl},
       title={Differentialgeometrie des isotropen {{Raumes}}. {{III}}.
  {{Fl{\"a}chentheorie}}},
        date={1942},
     journal={Math. Z.},
      volume={48},
       pages={369\ndash 427},
      review={\MR{9145}},
      doi={10.1007/BF01180022}
}

\bib{umehara_ParametrizationWeierstrassFormulae_1992}{article}{
      author={Umehara, Masaaki},
      author={Yamada, Kotaro},
       title={A parametrization of the Weierstrass formulae and perturbation of
  complete minimal surfaces in $\mathbb{R}^3$ into the hyperbolic $3$-space},
        date={1992},
     journal={J. Reine Angew. Math.},
      volume={432},
       pages={93–116},
      review={\MR{1184761}},
      doi={10.1515/crll.1992.432.93}
}

\bib{umehara_MaximalSurfacesSingularities_2006}{article}{
      author={Umehara, Masaaki},
      author={Yamada, Kotaro},
       title={Maximal surfaces with singularities in {{Minkowski}} space},
        date={2006},
     journal={Hokkaido Math. J.},
      volume={35},
      number={1},
       pages={13\ndash 40},
      review={\MR{2225080}},
      doi={10.14492/hokmj/1285766302},
}

\bib{weierstrass_UntersuchungenUberFlachen_1866}{article}{
      author={Weierstrass, Karl~T.},
       title={Untersuchungen {\"u}ber die {{Fl{\"a}chen}}, deren mittlere
  {{Kr{\"u}mmung}} {\"u}berall gleich {{Null}} ist},
        date={1866},
     journal={Monatsber. Berliner Akad.},
       pages={612\ndash 625},
}

\bib{yasumoto_DiscreteMaximalSurfaces_2015}{article}{
      author={Yasumoto, Masashi},
       title={Discrete maximal surfaces with singularities in {{Minkowski}}
  space},
        date={2015},
     journal={Differential Geom. Appl.},
      volume={43},
       pages={130\ndash 154},
      review={\MR{3421881}},
      doi={10.1016/j.difgeo.2015.09.006}
}

\end{biblist}
\end{bibdiv}

%\bibliography{discCMCI3}

\end{document}